\newtheorem{obs} [subsection]{Remark}
\newtheorem{exm} [subsection]{Example}
\newtheorem{prop}[subsection]{Proposition}
\newtheorem{conj}[subsection]{Conjecture}
\newtheorem{teor}[subsection]{Theorem}
\newtheorem*{teor*}{Theorem}
\newtheorem{lema}[subsection]{Lemma}
\newtheorem{cor} [subsection]{Corollary}
\newcommand{\pa}{p_{\mathbf a}}
\newcommand{\za}{\zeta_{\mathbf a}}
\def\dx{\operatorname{dx}}
\def\dy{\operatorname{dy}}
\def\dt{\operatorname{dt}}
\def\Ree{\operatorname{Re}}
\begin{document}
\selectlanguage{english}
\frenchspacing
\numberwithin{equation}{section}

\large
\begin{center}
\textbf{On the restricted partition function via determinants with Bernoulli polynomials}

Mircea Cimpoea\c s
\end{center}
\normalsize

\begin{abstract}
Let $r\geq 1$ be an integer, $\mathbf a=(a_1,\ldots,a_r)$ a vector of positive integers and let $D\geq 1$ be a common multiple of $a_1,\ldots,a_r$.
We prove that, if a determinant $\Delta_{r,D}$, which depends only on $r$ and $D$, with entries consisting in values of Bernoulli polynomials is nonzero, 
then the restricted partition function $\pa(n): = $ the number of integer solutions $(x_1,\dots,x_r)$ to
$\sum_{j=1}^r a_jx_j=n$ with $x_1\geq 0, \ldots, x_r\geq 0$ can be computed in terms of values of Bernoulli polynomials and Bernoulli Barnes numbers.
 
\noindent \textbf{Keywords:} Restricted partition function; Bernoulli polynomials; Bernoulli Barnes numbers.

\noindent \textbf{2010 MSC:} Primary 11P81 ; Secondary 11B68, 11P82
\end{abstract}

\section{Introduction}

Let $\mathbf a:=(a_1,a_2,\ldots,a_r)$ be a sequence of positive integers, $r\geq 1$. 
The \emph{restricted partition function} associated to $\mathbf a$ is $\pa:\mathbb N \rightarrow \mathbb N$, 
$\pa(n):=$ the number of integer solutions $(x_1,\ldots,x_r)$ of $\sum_{i=1}^r a_ix_i=n$ with $x_i\geq 0$.
Let $D$ be a common multiple of $a_1,\ldots,a_r$. According to \cite{bell}, $\pa(n)$ is a 
quasi-polynomial of degree $r-1$, with the period $D$, i.e.
$$\pa (n) = d_{\mathbf a,r-1}(n)n^{r-1} + \cdots + d_{\mathbf a,1}(n)n + d_{\mathbf a,0}(n),\;\text{ for all }n\geq 0,$$
where $d_{\mathbf a,m}(n+D)=d_{\mathbf a,m}(n)$, $\text{ for all }0\leq m\leq r-1,n\geq 0$, and $d_{\mathbf a,r-1}(n)$ is not identically zero.
The restricted partition function $\pa(n)$ was studied extensively in the literature, starting with the works of Sylvester \cite{sylvester} and Bell \cite{bell}.
Popoviciu \cite{popoviciu} gave a precise formula for $r=2$. Recently, Bayad and Beck \cite[Theorem 3.1]{babeck} proved an
explicit expression of $\pa(n)$ in terms of Bernoulli-Barnes polynomials and the Fourier Dedekind sums, in the case that
$a_1,\ldots,a_r$ are are pairwise coprime.  
In our paper, we propose a new approach to compute $\pa(n)$ using the methods introduced in 
 \cite{lucrare}. In formula \eqref{poola} we prove that
$$ \sum_{m=0}^{r-1}\sum_{v=1}^{D} d_{\mathbf a,m}(v)D^{n+m} \frac{B_{n+m+1}(\frac{v}{D})}{n+m+1}  = \frac{(-1)^{r-1} n!}{(n+r)!}B_{r+n}(\mathbf a)-\delta_{0n},\text{ for all }n\in\mathbb N,$$
where $B_{j}(x)$'s are the \emph{Bernoulli polynomials}, $B_{j}(\mathbf a)$'s are the \emph{Bernoulli-Barnes numbers} and $\delta_{0n}$ is the \emph{Kronecker symbol}. Taking $0\leq n\leq rD-1$ and seeing $d_{\mathbf a,m}(v)$'s
as indeterminates, we obtain a system of $rD$ linear equations with the determinant denoted by $\Delta_{r,D}$.
In Proposition $2.1$, we note that if $\Delta_{r,D}\neq 0$, then $d_{\mathbf a,m}(v)$'s
can be computed using the Cramer's rule. We conjecture that $\Delta_{r,D}\neq 0$ for any $r,D\geq 1$.
We define
$$\Phi_j(x):=b_1\frac{B_j(x)}{j} + b_2\frac{B_{j+1}(x)}{j+1}+\cdots+b_{rD}\frac{B_{rD+j-1}(x)}{rD+j-1},\;1\leq j\leq r,$$
where $b_1,\ldots,b_{rD}\in\mathbb R$. In Proposition $2.3$, we prove that $\Delta_{r,D}\neq 0$ if and only if
$$\Phi_j(0)=\Phi_j(\frac{1}{D})=\cdots=\Phi_j(\frac{D-1}{D})=0, \text{ for all }1\leq j\leq r, \text{ implies } b_1=\cdots=b_{rD}=0.$$
In Proposition $2.6$ we prove that the above condition holds for $r=1$, hence $\Delta_{1,D}\neq 0$.

In Theorem $2.11$, we prove that if $\Phi_j(0)=\Phi_j(\frac{1}{D})=\cdots=\Phi_j(\frac{D-1}{D})=0, \text{ for all }1\leq j\leq r$, then
there exists a polynomial $Q\in\mathbb R[x]$ of degree $\leq r-1$ such that 
$$\Phi_j(x)=A_{j,0}(x)S(x) + A_{j,1}(x)S(x+1) + \cdots + A_{j,r-1}(x)S(x+r-1),$$
where $S(x)=(Dx)_{Dr}Q(x)$, $(Dx)_{Dr}=Dx(Dx-1)\cdots(Dx-Dr+1)$ and  
$$A_{j,\ell}(x)=(-1)^{r-1}\sum_{t=0}^{\ell}(-1)^t \binom{r}{t} (x+\ell-t)^{j-1},\; \text{ for all }1\leq j\leq r,\; 0\leq \ell\leq r-1.$$
Using this, in Corollary $2.12$ we prove that $\Delta_{2,D}\neq 0$. Writing 
$$Q(x)=a_0+a_1(x-r)+a_2(x-r)(x-\frac{r}{2})+\cdots+a_{r-1}(x-r)(x-\frac{r}{2})^{r-2},$$
we define $\Phi_{j,t}(x)=A_{j,0}(x)S_t(x) + A_{j,1}(x)S_t(x+1) + \cdots + A_{j,r-1}(x)S_t(x+r-1)$, where $S_0(x)=1$, $S_t(x)=(x-r)(x-\frac{r}{2})^{t-1}$ for $t\geq 1$.
We let $I_{j,t}:=\int_0^1\Phi_{j,t}(x)\dx$. In Corollary $2.14$ we prove that 
$\Delta_{r,D}\neq 0$  if and only if $\Delta'_{r,D}:=\det(I_{j,t})_{\substack{1\leq j\leq r \\ 0\leq t\leq r-1}}\neq 0$. In Proposition $2.16$
we prove that $I_{j,t}=0$ for $t+(D+1)t+j\equiv 1(\bmod\; 2)$. 

In the second section, we consider the case $D=1$ and $r\geq 1$, i.e. $\mathbf a=(1,\ldots,1)$. Using the arithmetics of the Bernoulli numbers, we 
show in Theorem $3.4$ that $\Delta_{1,r}\neq 0$. 

\section{Main results}

Let $\mathbf a:=(a_1,a_2,\ldots,a_r)$ be a sequence of positive integers, $r\geq 1$. 
The \emph{restricted partition function} associated to $\mathbf a$ is $\pa:\mathbb N \rightarrow \mathbb N$, 
$$\pa(n):= \#\{ (x_1,\ldots,x_r)\in\mathbb N^r\;:\; \sum_{i=1}^r a_ix_i=n\},\;\text{ for all }n\geq 0.$$
Let $D$ be a common multiple of $a_1,\ldots,a_r$. Bell \cite{bell} has proved that $\pa(n)$ is a 
quasi-polynomial of degree $r-1$, with the period $D$, i.e.
\begin{equation}
\pa (n) = d_{\mathbf a,r-1}(n)n^{r-1} + \cdots + d_{\mathbf a,1}(n)n + d_{\mathbf a,0}(n),\;\text{ for all }n\geq 0,
\end{equation}
where $d_{\mathbf a,m}(n+D)=d_{\mathbf a,m}(n)$, for all $0\leq m\leq r-1,n\geq 0$, and $d_{\mathbf a,r-1}(n)$ is not identically zero.
The \emph{Barnes zeta} function associated to $\mathbf a$ and $w>0$ is
$$ \za(s,w):=\sum_{n=0}^{\infty} \frac{\pa(n)}{(n+w)^s},\; \Ree s>r,$$
see \cite{barnes} and \cite{spreafico} for further details. It is well known that $\za(s,w)$ is meromorphic on $\mathbb C$ with poles at most in the set $\{1.\ldots,r\}$.
We consider the function
\begin{equation}
\za(s) := \lim_{w\searrow 0}(\za(s,w)-w^{-s}).
\end{equation}
In \cite[Lemma 2.6]{lucrare} it was proved that 
\begin{equation}
\za(s)=\frac{1}{D^s}\sum_{m=0}^{r-1}\sum_{v=1}^{D} d_{\mathbf a,m}(v)D^m \zeta(s-m,\frac{v}{D}), 
\end{equation}
where $$\zeta(s,w):=\sum_{n=0}^{\infty} \frac{1}{(n+w)^s},\;\Ree s>1,$$
is the \emph{Hurwitz zeta} function. 
The \emph{Bernoulli numbers} $B_j$ are defined by
$$ \frac{z}{e^z-1} = \sum_{j=0}^{\infty}B_j \frac{z^j}{j!}, $$
$B_0=1$, $B_1=-\frac{1}{2}$, $B_2=\frac{1}{6}$, $B_4=-\frac{1}{30}$ and $B_n=0$ if $n$ is odd and greater than $1$.
The \emph{Bernoulli polynomials} are defined by
$$\frac{ze^{xz}}{(e^z-1)}=\sum_{n=0}^{\infty}B_n(x)\frac{z^n}{n!}.$$
They are related with the Bernoulli numbers by
$$B_n(x)=\sum_{k=0}^n \binom{n}{k}B_{n-k}x^k.$$
The Bernoulli polynomials satisfy the identities:
\begin{equation}
\int_x^{x+1} B_n(t) \dt = x^n,\;\text{ for all }n\geq 1.
\end{equation}
\begin{equation}
B_{n}(x+1)-B_n(x) = nx^{n-1},\;\text{ for all }n\geq 1.
\end{equation}
It is well know, see for instance \cite[Theorem 12.13]{apostol}, that
\begin{equation}
\zeta(-n,w)=-\frac{B_{n+1}(w)}{n+1},\;\text{ for all }n\in \mathbb N,w>0. 
\end{equation}
The \emph{Bernoulli-Barnes polynomials} are defined by
$$\frac{z^r e^{xz}}{(e^{a_1 z}-1)\cdots (e^{a_r z}-1)}=\sum_{j=0}^{\infty}B_j(x;\mathbf a)\frac{z^j}{j!}. $$
The \emph{Bernoulli-Barnes numbers} are defined by
$$B_j(\mathbf a):=B_j(0;\mathbf a)=\sum_{i_1+\cdots+ i_r=j}\binom{j}{i_1,\ldots,i_r} B_{i_1}\cdots B_{i_r}a_1^{i_1-1}\cdots a_r^{i_r-1}.$$
According to \cite[Formula (3.10)]{rui}, we have that
\begin{equation}
\za(-n,w)=\frac{(-1)^r n!}{(n+r)!}B_{r+n}(w;\mathbf a),\;\text{ for all }n\in\mathbb N. 
\end{equation}
From $(2.2)$ and $(2.7)$ it follows that
\begin{equation}
\za(-n)=\frac{(-1)^r n!}{(n+r)!}B_{r+n}(\mathbf a),\;\text{ for all }n\geq 1,\; \za(0)=\frac{(-1)^r}{r!}B_{r}(\mathbf a)-1.
\end{equation}
From $(2.3),(2.6)$ and $(2.8)$ it follows that
\begin{equation}\label{poola}
\sum_{m=0}^{r-1}\sum_{v=1}^{D} d_{\mathbf a,m}(v)D^{n+m} \frac{B_{n+m+1}(\frac{v}{D})}{n+m+1}  = \frac{(-1)^{r-1} n!}{(n+r)!}B_{r+n}(\mathbf a)-\delta_{0n},\;\text{ for all }n\in\mathbb N,
\end{equation}
where $\delta_{0n}=\begin{cases} 1,& n=0,\\ 0,& n\geq 1\end{cases}$ is the \emph{Kronecker's symbol}.
Given values $0\leq n\leq rD-1$ in $(2.9)$ and seeing $d_{\mathbf a,m}(v)$'s as indeterminates, we obtain a system of linear equations with the determinant \small{
\begin{equation}\label{pista}
\Delta_{r,D}:= \begin{vmatrix}
\frac{B_1(\frac{1}{D})}{1} & \cdots & \frac{B_1(1)}{1} 
& \cdots & D^{r-1}\frac{B_r(\frac{1}{D})}{r} & \cdots & D^{r-1}\frac{B_r(1)}{r} \\
D\frac{B_2(\frac{1}{D})}{2} & \cdots & D\frac{B_2(1)}{2} 
& \cdots & D^{r}\frac{B_{r+1}(\frac{1}{D})}{r+1} & \cdots & D^{r}\frac{B_{r+1}(1)}{r+1} \\
\vdots & \vdots & \vdots & \vdots & \vdots & \vdots & \vdots  \\
D^{rD-1}\frac{B_{rD}(\frac{1}{D})}{rD} & \cdots & D^{rD-1}\frac{B_{rD}(1)}{rD} & 
 \cdots & D^{rD+r-2}\frac{B_{rD+r-1}(\frac{1}{D})}{rD+r-1} & \cdots & D^{rD+r-2}\frac{B_{rD+r-1}(1)}{rD+r-1} 
\end{vmatrix} \end{equation}}

\begin{prop}
With the above notations, if $\Delta_{r,D}\neq 0$, then 
$$d_{\mathbf a,m}(v) = \frac{\Delta_{r,D}^{m,v}}{\Delta_{r,D}},\;\text{ for all } 1\leq v\leq D, 0\leq m\leq r-1,$$ 
where $\Delta_{r,D}^{m,v}$ is the determinant obtained from $\Delta_{r,D}$, as defined in \eqref{pista}, by replacing the 
$(mD+v)$-th column with the column $(\frac{(-1)^{r-1} n!}{(n+r)!}B_{n+r}(\mathbf a)-\delta_{n0})_{0\leq n\leq rD-1}$.
\end{prop}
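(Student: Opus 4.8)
The plan is to recognize equations \eqref{poola}, specialized to $0\le n\le rD-1$, as a square system of $rD$ linear equations in the $rD$ quantities $d_{\mathbf a,m}(v)$, $1\le v\le D$, $0\le m\le r-1$, whose coefficient matrix is exactly the one appearing in \eqref{pista}, and then to invoke Cramer's rule.

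First I would fix an ordering of the unknowns, placing $d_{\mathbf a,m}(v)$ in position $mD+v$. Rewriting \eqref{poola} for $n=0,1,\ldots,rD-1$, the coefficient of $d_{\mathbf a,m}(v)$ in the equation indexed by $n$ is $D^{n+m}\,B_{n+m+1}(\tfrac{v}{D})/(n+m+1)$, which is precisely the entry in row $n+1$ and column $mD+v$ of the matrix $\Delta_{r,D}$ as written in \eqref{pista} (the $D$ columns of the block with index $m$ running over $v=1,\ldots,D$). The right-hand sides of these $rD$ equations assemble into the column vector $\big(\tfrac{(-1)^{r-1}n!}{(n+r)!}B_{n+r}(\mathbf a)-\delta_{n0}\big)_{0\le n\le rD-1}$. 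Thus the system reads $M\,\mathbf d=\mathbf c$, where $\det M=\Delta_{r,D}$, $\mathbf d=(d_{\mathbf a,m}(v))$, and $\mathbf c$ is the column just described.

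Next I would observe that, by \cite{bell}, the quasi-polynomial coefficients $d_{\mathbf a,m}(v)$ exist and satisfy \eqref{poola} for \emph{every} $n\in\mathbb N$, hence in particular for $0\le n\le rD-1$; therefore the genuine vector $\mathbf d$ is a solution of $M\mathbf d=\mathbf c$. Under the hypothesis $\Delta_{r,D}\neq 0$ the matrix $M$ is invertible, so this solution is unique, and consequently $\mathbf d$ coincides with the Cramer's rule solution: $d_{\mathbf a,m}(v)=\Delta_{r,D}^{m,v}/\Delta_{r,D}$, where $\Delta_{r,D}^{m,v}$ is obtained from $\Delta_{r,D}$ by replacing its $(mD+v)$-th column with $\mathbf c$. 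This is exactly the claimed formula.

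I do not expect a genuine obstacle in this proof; it is an application of linear algebra. The only points demanding care are the bookkeeping that identifies the $(mD+v)$-th column of $\Delta_{r,D}$ with the unknown $d_{\mathbf a,m}(v)$ and with the correct row ordering $n=0,\ldots,rD-1$, and the logical remark that one does not solve the system directly: it suffices that the true coefficient vector satisfies it, after which invertibility of $M$ forces it to be the unique (Cramer's rule) solution.
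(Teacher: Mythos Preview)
Your proposal is correct and follows exactly the paper's approach: the paper's own proof is the single sentence ``It follows from \eqref{poola} and \eqref{pista} using the Cramer's rule,'' and you have simply spelled out the bookkeeping (ordering of the unknowns, identification of the coefficient matrix with \eqref{pista}, and the right-hand side column) that makes that sentence work.
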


\begin{proof}
It follows from \eqref{poola} and \eqref{pista} using the Cramer's rule.
\end{proof}

\begin{cor}
 With the above notations, if $\Delta_{r,D}\neq 0$, then 
$$\pa(n)=\frac{1}{\Delta_{r,D}}\sum_{m=0}^{r-1} \Delta_{r,D}^{m,v} n^m,\;\text{ for all }n\in\mathbb N.$$
\end{cor}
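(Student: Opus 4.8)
The plan is to obtain this as a direct formal consequence of the quasi-polynomial expansion $(2.1)$ together with Proposition $2.1$. First I would recall that, by $(2.1)$, we have $\pa(n)=\sum_{m=0}^{r-1}d_{\mathbf a,m}(n)n^m$ for every $n\in\mathbb N$, and that each coefficient function $d_{\mathbf a,m}$ is periodic of period $D$, i.e. $d_{\mathbf a,m}(n+D)=d_{\mathbf a,m}(n)$. Hence, given $n\in\mathbb N$, if $v\in\{1,\dots,D\}$ denotes the unique representative with $v\equiv n\pmod D$ (taking $v=D$ when $D\mid n$), iterating the periodicity relation gives $d_{\mathbf a,m}(n)=d_{\mathbf a,m}(v)$ for all $0\leq m\leq r-1$.

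Next, since $\Delta_{r,D}\neq 0$ by hypothesis, Proposition $2.1$ is applicable and yields $d_{\mathbf a,m}(v)=\Delta_{r,D}^{m,v}/\Delta_{r,D}$ for all $1\leq v\leq D$ and $0\leq m\leq r-1$. Substituting these values into the expansion of $\pa(n)$ above produces
$$\pa(n)=\sum_{m=0}^{r-1}d_{\mathbf a,m}(v)n^m=\frac{1}{\Delta_{r,D}}\sum_{m=0}^{r-1}\Delta_{r,D}^{m,v}n^m,$$
which is exactly the asserted identity, with the understanding that $v$ in the right-hand side is the residue of $n$ modulo $D$ chosen in $\{1,\dots,D\}$.

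I expect no genuine obstacle here: the statement is purely a matter of combining $(2.1)$ and Proposition $2.1$. The one point requiring a little care is the bookkeeping of indices modulo $D$ — namely, that the column index $mD+v$ in the definition of $\Delta_{r,D}^{m,v}$ is the one matching the residue class of $n$, and that the boundary case $D\mid n$ is handled by using the representative $v=D$ rather than $v=0$, so that Proposition $2.1$ (which is stated for $1\leq v\leq D$) applies verbatim.
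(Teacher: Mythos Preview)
Your proof is correct and follows precisely the paper's approach, which simply states that the result follows from Proposition $2.1$ and $(2.1)$. Your version is more detailed---in particular, you make explicit the periodicity reduction to the representative $v\in\{1,\dots,D\}$---but the underlying argument is identical.
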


\begin{proof}
 If follows from Proposition $2.1$ and $(2.1)$.
\end{proof}

Let $r\geq 1$ be an integer and $b_1,b_2,\ldots,b_{rD}$ be some numbers. For $1\leq j\leq r$, let 
\begin{equation}\label{coor}
\Phi_j(x):=b_1\frac{B_j(x)}{j} + b_2\frac{B_{j+1}(x)}{j+1}+\cdots+b_{rD}\frac{B_{rD+j-1}(x)}{rD+j-1}\in \mathbb Q[x].
\end{equation}

\begin{prop}
 The following are equivalent:
\begin{enumerate}
 \item[(1)] $\Delta_{r,D}\neq 0$.
 \item[(2)] $\Phi_j(0)=\Phi_j(\frac{1}{D})=\cdots=\Phi_j(\frac{D-1}{D})=0,\;\text{ for all }1\leq j\leq r$, implies $b_1=b_2=\cdots=b_{rD}=0$.
\end{enumerate}
\end{prop}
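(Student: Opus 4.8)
The plan is to recognize that the determinant $\Delta_{r,D}$ is exactly the coefficient matrix of the linear system obtained from \eqref{poola} by taking $n=0,1,\dots,rD-1$, where the unknowns are the $rD$ quantities $d_{\mathbf a,m}(v)$ (for $0\le m\le r-1$, $1\le v\le D$). A square matrix has nonzero determinant if and only if its transpose does, equivalently if and only if its rows (here, the columns indexed by the pairs $(m,v)$) are linearly independent. So I would first set up the bookkeeping: rename the $rD$ scalars $d_{\mathbf a,m}(v)$ as $b_1,\dots,b_{rD}$, ordering them so that $b_{mD+v}$ corresponds to $\frac{B_{n+m+1}(v/D)}{n+m+1}$ — this matches precisely the grouping in \eqref{pista}, where the column block for a fixed $m$ runs over $v=1,\dots,D$ and the power of $D$ attached is $D^{n+m}$.

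Next I would observe that a vector $(b_1,\dots,b_{rD})$ lies in the kernel of $\Delta_{r,D}^{\top}$ precisely when, for every row index $n$ with $0\le n\le rD-1$, the combination $\sum_{m,v} b_{mD+v}\, D^{n+m}\,\frac{B_{n+m+1}(v/D)}{n+m+1}$ vanishes. Now I would factor the row-dependence on $n$: the $n$-th equation involves the polynomials $B_{n+m+1}$ for $m=0,\dots,r-1$, which, after grouping by $j=m+1$, is visibly the $n$-th "evaluation" of the family $\Phi_j$ defined in \eqref{coor}. More precisely, writing $j=m+1$, the $n$-th row equation reads $\sum_{j=1}^{r} D^{n+j-1}\big(\text{linear form in }b\big)=0$; and the linear form $\sum_{v=1}^{D}\big(\sum_{\text{appropriate index}}b_\bullet\big)\frac{B_{n+j}(v/D)}{n+j}$ is nothing but a disguised statement about $\Phi_j$ evaluated at the points $v/D$. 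The cleanest way to make this precise is to note that the coefficient of $b_k$ in the $n$-th row, for $k$ in the $j$-th block, is $D^{n+j-1}\frac{B_{n+j+\text{(offset)}}(v/D)}{n+j+\text{(offset)}}$, and summing over $k$ reproduces $\Phi_j(v/D)$ up to the harmless factors $D^{n}$; thus the whole row equation collapses to $\sum_{j} D^{n+j-1}\Phi_j$-type data.

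The technical heart — and the step I expect to be the main obstacle — is pinning down that the $rD$ equations indexed by $n=0,\dots,rD-1$ are equivalent to the $rD$ equations $\Phi_j(v/D)=0$ for $1\le j\le r$, $0\le v\le D-1$ (note the shift from $v\in\{1,\dots,D\}$ in \eqref{poola} to $v\in\{0,\dots,D-1\}$ in statement (2), which is legitimate because $B_j$ is a polynomial and $B_j(1),B_j(1/D),\dots,B_j((D-1)/D)$ span the same information as $B_j(0),\dots,B_j((D-1)/D)$ once $j$ is allowed to vary, or more simply because $v/D$ and $(v-D)/D$ differ by an integer and \eqref{eqn:5} relates the values). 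Concretely: the map sending $(b_1,\dots,b_{rD})$ to the length-$rD$ vector $\big(\Delta_{r,D}^{\top}\cdot b\big)$ and the map sending it to $\big(\Phi_j(v/D)\big)_{j,v}$ have the same kernel, because each is obtained from the other by an invertible change of basis on the target — the $\Phi_j(v/D)$ are $\mathbb Q$-linear combinations (with a Vandermonde-type, hence invertible, transition matrix coming from the powers $D^n$, $0\le n\le rD-1$) of the row-values of $\Delta_{r,D}^{\top}$, and vice versa. Therefore $\Delta_{r,D}\ne 0\iff\Delta_{r,D}^{\top}$ is injective $\iff$ the only $b$ with all $\Phi_j(v/D)=0$ is $b=0$, which is exactly the implication in (2).

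Once that equivalence is in hand I would write up the two directions explicitly: $(1)\Rightarrow(2)$ is immediate since injectivity of $\Delta_{r,D}^{\top}$ forces $b=0$ whenever the image vanishes, and the image vanishing is (by the equivalence just established) the same as all $\Phi_j(v/D)=0$; $(2)\Rightarrow(1)$ is the contrapositive — a nonzero kernel vector of $\Delta_{r,D}^{\top}$ would give a nonzero $(b_1,\dots,b_{rD})$ with all the required vanishings, contradicting (2). The only place where care is genuinely needed is making the index/offset matching between the $b_k$'s and the entries $\frac{B_{n+m+1}(v/D)}{n+m+1}$ airtight, and confirming that the $D$-power factors $D^{n+m}$ do not destroy (and in fact, being nonzero, preserve) the equivalence of the two linear systems.
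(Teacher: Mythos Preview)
Your overall instinct --- that both conditions are assertions about the (non)singularity of an $rD\times rD$ matrix, and that these two matrices should be related by transposition and harmless scalings --- is exactly right, and it is what the paper does. But your execution has a genuine gap in the bookkeeping that makes the argument as written fail.

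You identify $b_k$ with $d_{\mathbf a,m}(v)$ via $k=mD+v$; that is, you index $b$ by the \emph{columns} of $\Delta_{r,D}$. With that convention the condition you write down, $\sum_{m,v} b_{mD+v}\,D^{n+m}\frac{B_{n+m+1}(v/D)}{n+m+1}=0$ for each $n$, is $b\in\ker\Delta_{r,D}$. But then $\Phi_j(w/D)=\sum_k b_k\frac{B_{j+k-1}(w/D)}{j+k-1}$ has Bernoulli index $j+mD+v-1$, whereas the $n$-th row of $\Delta_{r,D}$ has Bernoulli index $n+m+1$; these simply do not match, and no ``Vandermonde-type transition matrix coming from the powers $D^n$'' exists --- the factors $D^n$ are diagonal scalars, not a Vandermonde. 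The correct identification is the opposite one: index $b$ by the \emph{rows}, $b_k\leftrightarrow n=k-1$. Then the $(m,v)$-component of $\Delta_{r,D}^{\top}b$ equals $D^m\sum_{k}D^{k-1}b_k\,\frac{B_{m+k}(v/D)}{m+k}$, which after the harmless rescaling $b_k\mapsto D^{k-1}b_k$ is precisely $D^m\,\Phi_{m+1}(v/D)$. In other words the coefficient matrix of the system $\{\Phi_j(\cdot)=0\}$ is, up to nonzero row/column scalings, the \emph{transpose} of $\Delta_{r,D}$ --- not a target-side change of basis of it.

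A second point you wave past is the shift of evaluation points from $v\in\{1,\dots,D\}$ (as in $\Delta_{r,D}$) to $v\in\{0,\dots,D-1\}$ (as in statement~(2)). Your suggestion of using $(2.5)$ yields only a rank-one relationship between the two matrices (the columns for $v=D$ and $v=0$ differ in exactly the single entry $n=m=0$), which does not by itself show the determinants are simultaneously nonzero. The paper handles this with the reflection identity $B_n(1-x)=(-1)^nB_n(x)$, which sends $v/D\mapsto(D-v)/D$ bijectively from $\{1/D,\dots,1\}$ onto $\{0,\dots,(D-1)/D\}$ at the cost only of row and column signs; this is the missing ingredient that makes the two determinants nonzero multiples of each other.
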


\begin{proof}
Since $B_n(1-x)=(-1)^n B_n(x),\;\text{ for all }x\in \mathbb R,n\geq 0$, from \eqref{pista} it follows that
$$
\Delta_{r,D} = D^{\frac{rD(rD +r-2)}{2}} \begin{vmatrix}
\frac{B_1(\frac{1}{D})}{1} & \cdots & \frac{B_1(1)}{1}  
& \cdots & \frac{B_r(\frac{1}{D})}{r} & \cdots & \frac{B_r(1)}{r} \\
\frac{B_2(\frac{1}{D})}{2} & \cdots & \frac{B_2(1)}{2}  
& \cdots & \frac{B_{r+1}(\frac{1}{D})}{r+1} & \cdots & \frac{B_{r+1}(1)}{r+1} \\
\vdots & \vdots & \vdots & \vdots & \vdots & \vdots & \vdots  
\\
\frac{B_{rD}(\frac{1}{D})}{rD} & \cdots & \frac{B_{rD}(1)}{rD} & 
 \cdots & \frac{B_{rD+r-1}(\frac{1}{D})}{rD+r-1} & \cdots & \frac{B_{rD+r-1}(1)}{rD+r-1} 
\end{vmatrix} =
$$
\begin{equation}\label{pista2}
 = (-1)^{\frac{rD(rD +r)}{2}} D^{\frac{rD(rD +r-2)}{2}} \begin{vmatrix}
\frac{B_1(\frac{D-1}{D})}{1} & \cdots & \frac{B_1(0)}{1}  
& \cdots & \frac{B_r(\frac{D-1}{D})}{r} & \cdots & \frac{B_r(0)}{r} \\
\frac{B_2(\frac{D-1}{D})}{2} & \cdots & \frac{B_2(0)}{2}  
& \cdots & \frac{B_{r+1}(\frac{D-1}{D})}{r+1} & \cdots & \frac{B_{r+1}(0)}{r+1} \\
\vdots & \vdots & \vdots & \vdots & \vdots & \vdots & \vdots  
\\
\frac{B_{rD}(\frac{D-1}{D})}{rD} & \cdots & \frac{B_{rD}(0)}{rD} & 
 \cdots & \frac{B_{rD+r-1}(\frac{D-1}{D})}{rD+r-1} & \cdots & \frac{B_{rD+r-1}(0)}{rD+r-1} 
\end{vmatrix}.
\end{equation}
Evaluating \eqref{coor} in $x=0,\frac{1}{D},\ldots,\frac{D-1}{D}$ and seeing $b_1,\ldots,b_{rD}$ as variables, from \eqref{pista2}
we obtain a system of linear equations with the determinant $C\cdot \Delta_{r,D}$, where $C\neq 0$. By Cramer's rule, we get the required result.
\end{proof}

Our computer experiments \cite{DGPS} yield us to propose the following conjecture.

\begin{conj}
$\Delta_{r,D}\neq 0$, for any integers $r,D\geq 1$.
\end{conj}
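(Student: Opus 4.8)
\medskip
\noindent\textbf{A proof strategy.} The plan is to push the reduction chain of this section to its end. By Corollary $2.14$ it suffices to prove that the $r\times r$ determinant $\Delta'_{r,D}=\det(I_{j,t})_{1\le j\le r,\,0\le t\le r-1}$ is nonzero, where $I_{j,t}=\int_0^1\Phi_{j,t}(x)\,\dx$ and $\Phi_{j,t}(x)=A_{j,0}(x)S_t(x)+\cdots+A_{j,r-1}(x)S_t(x+r-1)$ is assembled from the explicit binomial polynomials $A_{j,\ell}$ and from $S_0(x)=1$, $S_t(x)=(x-r)(x-\tfrac{r}{2})^{t-1}$. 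This is a substantial simplification: it trades the $rD\times rD$ determinant whose entries are Bernoulli polynomials at the points $0,\tfrac{1}{D},\dots,\tfrac{D-1}{D}$ for a fixed $r\times r$ determinant in which $D$ enters only through the shape of the $S_t$ and through the vanishing pattern of Proposition $2.16$.

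The next step is to evaluate the entries $I_{j,t}$ in closed form. Expanding $S_t(x+\ell)$ in powers of $x+\ell$ and integrating term by term via $\int_0^1(x+c)^s\,\dx=\frac{(c+1)^{s+1}-c^{s+1}}{s+1}$ — equivalently, rewriting the power functions through Bernoulli polynomials and using $\int_x^{x+1}B_n(t)\,\dt=x^n$ — turns each $I_{j,t}$ into a completely explicit finite rational expression in $r$ and $\tfrac{r}{2}$, with the binomial coefficients $\binom{r}{t}$ occurring in the $A_{j,\ell}$ as the only arithmetic input. Proposition $2.16$ then shows that a whole congruence class of the $I_{j,t}$ vanishes, so that after reordering the rows (index $j$) and the columns (index $t$) according to the relevant parities, the matrix $(I_{j,t})$ acquires a block form and $\Delta'_{r,D}$ factors, up to sign, into a product of strictly smaller determinants whose shape depends on the parity of $D$. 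One is thereby reduced to proving that each of these blocks is nonzero.

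The heart of the problem — and the reason the statement remains a conjecture — is exactly this last step. I would attack it in two complementary ways. First, after a triangular change of column basis that replaces the monomials of the $S_t$ by a clean basis, one may hope to recognize the blocks of $\Delta'_{r,D}$ as deformations of classical nonvanishing determinants of Vandermonde, Cauchy or Hankel type; should the $I_{j,t}$ admit a product-form evaluation, this settles the conjecture outright. Failing that, one can try to induct on $r$: Theorem $2.11$ encodes a hypothetical nontrivial solution in a single polynomial $Q$ of degree $\le r-1$ with $S(x)=(Dx)_{Dr}Q(x)$, and peeling off the top coefficient $a_{r-1}$ of $Q$ should express the obstruction for $(r,D)$ through the one for $(r-1,D)$ plus a single scalar, which one would then show is nonzero by analysing the leading behaviour of $I_{j,t}$ as a polynomial in $r$, where a triangular term dominates the determinant. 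The genuine difficulty is to make either argument uniform in $r$ \emph{and} $D$ simultaneously; the fact that the special case established in Theorem $3.4$ rests on von Staudt--Clausen type arithmetic of the Bernoulli numbers rather than on pure linear algebra strongly suggests that a number-theoretic ingredient — control of the denominators or $p$-adic valuations of the $I_{j,t}$ — will ultimately be needed to prove the conjecture in full generality.
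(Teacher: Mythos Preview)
The statement is labelled a \emph{conjecture} in the paper and is not proved there in general; the paper only establishes the special cases $r=1$ (Corollary~2.7), $r=2$ (Corollary~2.12) and $D=1$ (Theorem~3.4), and Remark~2.15 says explicitly that already $r=3$ with arbitrary $D$ could not be settled. Your write-up is not a proof either --- you acknowledge this yourself when you point to ``the reason the statement remains a conjecture'' --- but a research programme. So there is no proof to compare against: you and the paper stop at the same place, namely the reduction to the $r\times r$ determinant $\Delta'_{r,D}$ of Corollary~2.14 together with the parity vanishing pattern of Proposition~2.16, and neither carries the argument further.

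One factual slip in your summary does matter. You write $S_0(x)=1$ and $S_t(x)=(x-r)(x-\tfrac{r}{2})^{t-1}$, dropping the falling-factorial factor. By Theorem~2.11 one has $S(x)=(Dx)_{Dr}Q(x)$, so the building blocks in \eqref{cucu2}--\eqref{cucu3} each carry a factor $(Dx)_{Dr}$ of degree $Dr$. That factor is precisely where $D$ lives; without it your remark that ``$D$ enters only through the shape of the $S_t$'' would be vacuous, and your plan to evaluate the $I_{j,t}$ term by term via $\int_0^1(x+c)^s\,\dx$ substantially understates the difficulty: the integrands have degree of order $Dr$, not $r$, and the resulting expressions are far from the compact rational forms you anticipate. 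This is part of why the paper, even after all of its reductions, cannot close the case $r=3$.
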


We denote the \emph{falling factorial} (also known as the \emph{Pochhammer symbol}), 
$$(x)_D = x(x-1)\cdots (x-D+1).$$

\begin{lema}
For any $D\geq 1$ we have that $\int_0^D (x)_D dx > 0$.
\end{lema}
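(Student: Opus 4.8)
\section*{Proof proposal}

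The plan is to sidestep a direct analysis of the sign of $(x)_D$ on $[0,D]$ (it alternates on the successive unit subintervals, and the resulting alternating sum of subintegrals is unpleasant to control) and instead to telescope the integral down to a single integral over $[0,1]$ whose integrand is visibly positive.

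The key ingredient is the elementary identity
\[
(x+1)_{D+1}-(x)_{D+1}=(D+1)(x)_D ,\qquad x\in\mathbb R,
\]
obtained by factoring $x(x-1)\cdots(x-D+1)=(x)_D$ out of the two products on the left (the bracket that remains is $(x+1)-(x-D)=D+1$). I would integrate this identity over $[0,D]$; after the substitution $u=x+1$, which turns $\int_0^D(x+1)_{D+1}\,dx$ into $\int_1^{D+1}(u)_{D+1}\,du$, the overlapping piece $\int_1^{D}(x)_{D+1}\,dx$ cancels and one is left with
\[
(D+1)\int_0^D(x)_D\,dx=\int_D^{D+1}(x)_{D+1}\,dx-\int_0^1(x)_{D+1}\,dx .
\]
Putting $x=D+w$ in the first integral on the right and noting that $(D+w)_{D+1}=\prod_{i=0}^{D}(w+i)$ whereas $(w)_{D+1}=\prod_{i=0}^{D}(w-i)$, this becomes
\[
(D+1)\int_0^D(x)_D\,dx=\int_0^1 w\left(\prod_{i=1}^{D}(w+i)-\prod_{i=1}^{D}(w-i)\right)dw .
\]

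It then remains only to check that the integrand on the right is positive on $(0,1)$. For $w\in(0,1)$ and $1\le i\le D$ we have $w+i>i-w>0$, hence
\[
\prod_{i=1}^{D}(w+i)>\prod_{i=1}^{D}(i-w)=\prod_{i=1}^{D}|w-i|\ge\prod_{i=1}^{D}(w-i),
\]
so the parenthesis is strictly positive, and multiplying by $w>0$ keeps it positive. Therefore the right-hand integral is positive, and dividing by $D+1>0$ yields $\int_0^D(x)_D\,dx>0$.

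I do not anticipate a real obstacle along this route. The only genuinely non-routine move is the idea of lifting from $(x)_D$ to $(x)_{D+1}$ so that the integral telescopes; after that everything is bookkeeping, the sole point requiring a little care being the final sign comparison, where $\prod_{i=1}^{D}(w-i)$ is negative for odd $D$ (making the inequality immediate) and positive but strictly dominated by $\prod_{i=1}^{D}(w+i)$ for even $D$ — both cases being handled uniformly by passing through absolute values.
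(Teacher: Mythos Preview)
Your argument is correct. The identity $(x+1)_{D+1}-(x)_{D+1}=(D+1)(x)_D$ checks, the telescoping is clean, and the final positivity comparison $\prod_{i=1}^D(w+i)>\prod_{i=1}^D|w-i|\ge\prod_{i=1}^D(w-i)$ for $w\in(0,1)$ is valid regardless of the parity of $D$.

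Your route, however, is genuinely different from the paper's. The paper splits into cases on the parity of $D$: for $D$ odd it uses the substitution $x\mapsto x-k$ (with $D=2k+1$) to exhibit the integrand over $[0,D-1]$ as an odd function about the midpoint, so that $\int_0^{D-1}(x)_D\,dx=0$ and only the trivially positive piece $\int_{D-1}^D(x)_D\,dx$ remains; for $D$ even it passes through the identity $(x)_D=\frac{2}{D+1}\big((x)_D(x-\frac{D}{2})-(x)_{D+1}\big)$ and then invokes the odd-case result for $D+1$ together with another symmetry argument. What your lift to $(x)_{D+1}$ buys is a uniform treatment with no parity split, reducing the whole question to a single explicit integrand over $[0,1]$ whose sign is transparent. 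The paper's approach, by contrast, makes the symmetry of the falling factorial around its midpoint more visible but at the cost of a two-step bootstrap in the even case.
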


\begin{proof}
If $D=2k+1$ then
\begin{equation}\label{l41}
\int_0^{D-1} (x)_D \dx = \int_{-k}^k (x+k)_D \dx =  \int_{-k}^k x(x^2-1)\cdots (x^2-(k-1)^2)\dx = 0.
\end{equation}
From \eqref{l41} it follows that 
$$ \int_0^D (x)_D \dx = \int_{D-1}^D (x)_D \dx > 0.$$
If $D=2k$ then
\begin{equation}\label{l42}
\int_0^{D} (x)_D \dx 
 = \frac{2}{D+1}\int_{0}^D (x)_D(x-\frac{D}{2})\dx -\frac{2}{D+1}\int_{0}^D (x)_{D+1}\dx.
\end{equation}
From \eqref{l41} and \eqref{l42} it follows that
\begin{equation}\label{l43}
\int_0^{D} (x)_D \dx =\frac{2}{D+1}\int_{0}^D (x)_D(x-\frac{D-1}{2})\dx > \frac{2}{D+1}\int_{0}^{D-1} (x)_D(x-\frac{D-1}{2})\dx.
\end{equation}
On the other hand, \small{
$$\int_{0}^{D-1} (x)_D(x-\frac{D-1}{2})\dx = \int_{-\frac{D-1}{2}}^{\frac{D-1}{2}}x(x+\frac{D-1}{2})_D\dx =
\int_{-\frac{D-1}{2}}^{\frac{D-1}{2}} x (x^2-\frac{1}{4})\cdots (x^2-\frac{(D-1)^2}{4})\dx = 0,$$}
hence, from \eqref{l42} and \eqref{l43} it follows that
$$ \int_0^D (x)_D \dx = \int_{D-1}^D (x)_D \dx > 0,$$
thus the proof is complete.
\end{proof}

\begin{prop}
Assume $r=1$ and let $\Phi(x):=\Phi_1(x)=b_1B_1(x)+b_2\frac{B_2(x)}{2}+\cdots+b_D \frac{B_D(x)}{D}$.

If $\Phi(0)=\Phi(\frac{1}{D})=\cdots=\Phi(\frac{D-1}{D})=0$ then $\Phi=0$.
\end{prop}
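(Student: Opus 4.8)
The plan is to pin down $\Phi$ up to a single scalar using the $D$ vanishing conditions together with its degree, and then to force that scalar to be $0$ using one integral identity.

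First I would note that $\Phi=\Phi_1$ is a polynomial of degree at most $D$, since $\deg\bigl(\frac{B_j(x)}{j}\bigr)=j$ and $j$ ranges from $1$ to $D$ in \eqref{coor} when $r=1$. By hypothesis $\Phi$ vanishes at the $D$ distinct points $0,\frac1D,\dots,\frac{D-1}{D}$, which are exactly the roots of
$$(Dx)_D=Dx(Dx-1)\cdots(Dx-D+1)=D^{D}\prod_{k=0}^{D-1}\Bigl(x-\tfrac{k}{D}\Bigr).$$
Since a polynomial of degree at most $D$ that vanishes at $D$ distinct points is a constant multiple of the monic polynomial with those roots, there is a constant $c\in\mathbb R$ with $\Phi(x)=\dfrac{c}{D^{D}}\,(Dx)_D$.

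Next I would evaluate $\int_0^1\Phi(x)\,\dx$ in two ways. On the one hand, the identity $\int_x^{x+1}B_n(t)\,\dt=x^n$ taken at $x=0$ gives $\int_0^1 B_j(x)\,\dx=0$ for every $j\ge 1$, hence
$$\int_0^1\Phi(x)\,\dx=\sum_{j=1}^{D}\frac{b_j}{j}\int_0^1 B_j(x)\,\dx=0.$$
On the other hand, the substitution $u=Dx$ yields
$$\int_0^1\Phi(x)\,\dx=\frac{c}{D^{D}}\int_0^1 (Dx)_D\,\dx=\frac{c}{D^{D+1}}\int_0^D (u)_D\,du,$$
and by Lemma $2.5$ the last integral is strictly positive. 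Therefore $c=0$, and consequently $\Phi\equiv 0$.

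The step that carries all the weight is the positivity statement in Lemma $2.5$, which is already in hand; within the present argument there is no further obstacle, only the observation that the $D$ interpolation conditions leave exactly one degree of freedom in $\Phi$, which the vanishing of $\int_0^1 B_j$ for $j\ge 1$ then removes. (Equivalently, one can argue contrapositively: if $b_D\neq 0$ then $\Phi$ has degree exactly $D$ and equals $\frac{b_D}{D^{D+1}}(Dx)_D$, so $\int_0^1\Phi(x)\,\dx=\frac{b_D}{D^{D+2}}\int_0^D(u)_D\,du\neq 0$ by Lemma $2.5$, contradicting $\int_0^1\Phi=0$; hence $b_D=0$, and then $\Phi$ has degree $<D$ with $D$ roots, so $\Phi=0$.)
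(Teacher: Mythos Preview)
Your proof is correct and follows essentially the same approach as the paper: the paper sets $F(x):=\Phi(x/D)$, observes that the vanishing hypothesis forces $F(x)=c\,(x)_D$ for some constant $c$, and then uses $\int_0^1\Phi(x)\,\dx=0$ together with Lemma~2.5 to conclude $c=0$. Your version simply carries out the same argument without the preliminary change of variable, writing $\Phi(x)=\tfrac{c}{D^{D}}(Dx)_D$ directly.
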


\begin{proof}
We let $F(x):=\Phi(\frac{x}{D})$. The hypothesis implies $F(x)=c(x)_D$ for some $c\in\mathbb R$.
On the other hand, from $(2.4)$ we have that
$$0 = \int_0^1 \Phi(x)\dx = \frac{1}{D}\int_0^D F(x)\dx = \frac{c}{D}\int_0^D (x)_D\dx,$$
therefore, by Lemma $2.5$, it follows that $c=0$ and thus $\Phi=0$.
\end{proof}

\begin{cor}
For any $D\geq 1$, we have that $\Delta_{1,D}\neq 0$.
\end{cor}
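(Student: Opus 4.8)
The plan is to deduce the statement directly from Proposition $2.6$ via the equivalence in Proposition $2.3$, specialized to $r=1$. First I would invoke Proposition $2.3$: to prove $\Delta_{1,D}\neq 0$ it suffices to check that condition $(2)$ holds for $r=1$, namely that
$$\Phi_1(0)=\Phi_1(\tfrac{1}{D})=\cdots=\Phi_1(\tfrac{D-1}{D})=0 \quad\Longrightarrow\quad b_1=b_2=\cdots=b_D=0,$$
where $\Phi_1(x)=b_1B_1(x)+b_2\frac{B_2(x)}{2}+\cdots+b_D\frac{B_D(x)}{D}$.

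Next I would apply Proposition $2.6$ with $\Phi=\Phi_1$: the vanishing of $\Phi_1$ at the $D$ points $0,\frac1D,\ldots,\frac{D-1}{D}$ forces $\Phi_1\equiv 0$ as a polynomial in $\mathbb{Q}[x]$. (This is the substantive content, and it rests on Lemma $2.5$, i.e. the positivity $\int_0^D (x)_D\,dx>0$; that lemma is the only nontrivial ingredient and was already established.) It then remains to observe that $\Phi_1\equiv 0$ implies $b_1=\cdots=b_D=0$. This is immediate because $\deg\frac{B_j(x)}{j}=j$ for $1\leq j\leq D$, so the polynomials $\frac{B_1(x)}{1},\frac{B_2(x)}{2},\ldots,\frac{B_D(x)}{D}$ have pairwise distinct degrees and are therefore linearly independent over $\mathbb{R}$; hence the only way their linear combination can be the zero polynomial is with all coefficients zero.

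Combining these, condition $(2)$ of Proposition $2.3$ holds for $r=1$, so by that proposition $\Delta_{1,D}\neq 0$, which is the desired conclusion. I do not anticipate any obstacle here: all the work is packaged in Lemma $2.5$ and Proposition $2.6$, and the corollary is just the translation of Proposition $2.6$ through the determinant criterion together with the trivial linear-independence remark.
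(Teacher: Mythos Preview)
Your proposal is correct and follows exactly the paper's approach: the paper's proof reads in its entirety ``It follows from Proposition $2.3$ and Proposition $2.6$,'' and you have simply unpacked that sentence, making explicit the (trivial) linear-independence step that the paper leaves implicit.
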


\begin{proof}
 It follows from Proposition $2.3$ and Proposition $2.6$.
\end{proof}

Let $n\geq 1$ be an integer. The forward difference operator $\Delta_n$ applied to a function $F(x)$ is
$$\Delta_n F(x):=F(x+n)-F(x).\;\text{We denote } \Delta:=\Delta_1.$$

\begin{lema}
Let $F_j(x):=D^{j-1}\Phi_j(\frac{x}{D})$, $1\leq j\leq r$. Assume that
$$F_j(0)=F_j(1)=\cdots=F_j(D-1)=0,\;\text{ for all }1\leq j\leq r.$$
There exists a polynomial $P\in\mathbb R[X]$ of degree $\leq r-1$ such that 
$$F_1(x)=\Delta_D^{r-1}((x)_{Dr}P(x)) \text{ and } \Delta_D F_j(x) = x^{j-1} \Delta_D F_1(x),\; 2\leq j\leq r.$$
\end{lema}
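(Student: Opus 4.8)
The plan is to establish the two conclusions separately. The relation $\Delta_D F_j(x)=x^{j-1}\Delta_D F_1(x)$ for $2\le j\le r$ is really just a reformulation of the identity $(2.5)$ and needs no hypothesis: writing $\Phi_j(x)=\sum_{k=1}^{rD}b_k\frac{B_{j+k-1}(x)}{j+k-1}$ and applying $B_n(x+1)-B_n(x)=nx^{n-1}$ termwise gives $\Phi_j(y+1)-\Phi_j(y)=\sum_{k=1}^{rD}b_ky^{j+k-2}=y^{j-1}g(y)$ with $g(y):=\sum_{k=1}^{rD}b_ky^{k-1}$, whence $\Delta_D F_j(x)=D^{j-1}\bigl(\Phi_j(\tfrac{x}{D}+1)-\Phi_j(\tfrac{x}{D})\bigr)=x^{j-1}g(\tfrac{x}{D})$; the case $j=1$ identifies $g(\tfrac{x}{D})=\Delta_D F_1(x)$. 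Note also that $\deg F_1=\deg\Phi_1\le rD$, a fact used at the end.

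For the first conclusion the main tool is a summation-by-parts (discrete integration-by-parts) identity. Fix $i\ge1$ and a chain of polynomial $\Delta_D$-antidifferences $S_0:=F_1,S_1,\dots,S_i$, i.e.\ $\Delta_D S_k=S_{k-1}$; such a chain exists and is unique up to additive constants, since $\Delta_D$ maps $\mathbb R[x]$ onto itself with one-dimensional kernel. Set $a_k(x):=\bigl(\Delta_D^{\,k}(t^i)\bigr)\big|_{t=x}$, so that $a_0(x)=x^i$, $a_i\equiv i!\,D^i$, $a_{i+1}\equiv0$, and $\Delta_D a_k=a_{k+1}$. I claim that
\[
F_{i+1}(x)=\sum_{k=0}^{i}(-1)^k a_k(x)\,S_k(x+kD)+c_i
\]
for a suitable constant $c_i$. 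One verifies this by applying $\Delta_D$ to the right-hand side: from $\Delta_D\bigl(S_k(x+kD)\bigr)=S_{k-1}(x+kD)$ and $\Delta_D a_k=a_{k+1}$ one gets $\Delta_D\bigl(a_k(x)S_k(x+kD)\bigr)=a_k(x)S_{k-1}(x+kD)+a_{k+1}(x)S_k(x+(k+1)D)$, and the resulting alternating sum telescopes to $a_0(x)\Delta_D S_0(x)-(-1)^{i+1}a_{i+1}(x)S_i(x+(i+1)D)=x^i\Delta_D F_1(x)=\Delta_D F_{i+1}(x)$. Hence both sides of the displayed identity have the same $\Delta_D$, so they differ by a constant; in particular the identity holds for any chain of antidifferences (the constant $c_i$ absorbs the choice).

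The first conclusion now follows by induction on $i=0,1,\dots,r-1$ of the statement: the chain $S_0,\dots,S_i$ can be chosen (by fixing the antidifference constants) so that $S_k$ vanishes at $0,1,\dots,(k+1)D-1$ for every $0\le k\le i$. The base case $i=0$ is precisely the hypothesis $F_1(0)=\cdots=F_1(D-1)=0$. For the step, assume $S_0,\dots,S_{i-1}$ are already chosen and pick any antidifference $S_i$ of $S_{i-1}$. Summing $\Delta_D S_i=S_{i-1}$ and using that $S_{i-1}$ vanishes on $\{0,\dots,iD-1\}$ shows $S_i(v+mD)=S_i(v)$ for all $0\le m\le i$ and $0\le v\le D-1$. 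Now evaluate the displayed identity (with exponent $i$) at $x=v\in\{0,\dots,D-1\}$: by hypothesis $F_{i+1}(v)=0$, while $S_k(v+kD)=0$ for $k<i$ because $0\le v+kD\le(k+1)D-1$, so only the $k=i$ term survives; since $a_i\equiv i!\,D^i\ne0$ this forces $S_i(v+iD)$, hence $S_i(v)$, to be independent of $v$. Subtracting that common value from $S_i$ makes it vanish on all of $\{0,\dots,(i+1)D-1\}$, closing the induction. Taking $i=r-1$: the polynomial $S_{r-1}$ has degree $\le rD+r-1$ and vanishes at $0,1,\dots,rD-1$, so $S_{r-1}(x)=(x)_{Dr}P(x)$ for some polynomial $P$ of degree $\le r-1$, and $\Delta_D^{r-1}S_{r-1}=F_1$ by construction, which is the assertion.

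I expect the only genuine obstacle to be guessing the correct shape of the summation-by-parts identity — that the coefficients must be the iterated forward differences $a_k(x)=\Delta_D^{\,k}(t^i)\big|_{t=x}$. This is exactly what makes the telescoping collapse (because $a_{i+1}\equiv0$) and, crucially, leaves a \emph{constant} top coefficient $a_i$, so that evaluating at the $D$ points $v=0,\dots,D-1$ pins down a single antidifference constant rather than $D$ of them. Everything else is routine bookkeeping with antidifference constants.
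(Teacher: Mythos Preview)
Your argument is correct and takes a genuinely different route from the paper's.

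The paper proceeds by a double recursion on $j$ and $t$: it writes $F_j(x)=(x)_D P_j^0(x)$ and then, from the relation $x\,\Delta_D F_j=\Delta_D F_{j+1}$, extracts at each level $t$ a divisibility $(x-tD)_D\mid\bigl((x-tD)P_j^{t-1}(x)-P_{j+1}^{t-1}(x)\bigr)$; the quotient defines $P_j^t$, and a somewhat delicate computation (the identities labeled (2.18)--(2.21) in the paper) then shows $F_j=\Delta_D^{t}\bigl((x)_{D(t+1)}P_j^{t}\bigr)$. At the end one reads off $P:=P_1^{\,r-1}$. Thus the paper tracks a whole family $P_j^t$ of quotient polynomials simultaneously.

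Your approach instead tracks only the successive $\Delta_D$-antidifferences $S_0=F_1,S_1,\dots,S_{r-1}$ of $F_1$, and uses an Abel--summation (discrete integration by parts) identity
\[
F_{i+1}(x)=\sum_{k=0}^{i}(-1)^k\bigl(\Delta_D^{\,k} t^{\,i}\bigr)\big|_{t=x}\,S_k(x+kD)+c_i
\]
to feed the hypotheses $F_{i+1}(0)=\cdots=F_{i+1}(D-1)=0$ back as constraints pinning down the single free constant in $S_i$. The key observation---that the top coefficient $a_i=\Delta_D^{\,i}t^{\,i}=i!\,D^{\,i}$ is a nonzero constant while $a_{i+1}\equiv 0$---is exactly what makes the evaluation at $v=0,\dots,D-1$ give a single equation rather than $D$ independent ones. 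Your remark that the identity holds ``for any chain'' is justified: the telescoping verification uses only $\Delta_D S_k=S_{k-1}$ and $S_0=F_1$, so two chains yield right-hand sides with the same $\Delta_D$, hence differing by a constant.

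What each approach buys: the paper's recursion produces, as a byproduct, representations $F_j=\Delta_D^{t}\bigl((x)_{D(t+1)}P_j^{t}\bigr)$ for all $j\le r-t$, not just $j=1$; but this extra information is not used downstream. Your Abel--summation argument is more economical---it never names the intermediate quotients $P_j^t$ and replaces the ad hoc manipulations (2.18)--(2.21) by a single transparent identity---at the cost of making the connection between $S_k$ and the paper's $(x)_{D(k+1)}P_1^{k}$ implicit (in fact $S_k$ coincides with $(x)_{D(k+1)}P_1^{k}$ once the constants are fixed).
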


\begin{proof}
From $(2.5)$ and $(2.11)$ it follows that
\begin{equation}\label{curul}
\Delta_D F_j(x):= b_1x^{j-1}+\frac{b_2}{D}x^j+\cdots+\frac{b_{Dr}}{D^{Dr-1}}x^{Dr+j-2} = x^{j-1}\Delta_D F_1(x),\;\text{ for all }1\leq j\leq r.
\end{equation}
Let $1\leq j\leq r$. The hypothesis implies $(x)_D|F_j(x)$. We write $F_j(x)=(x)_D P_j(x)$, $1\leq j\leq r$, where $P_j\in\mathbb R[X]$
is a polynomial of degree $\leq (r-1)D+j-1$.

For $1\leq j \leq r$ and $0\leq t\leq r-j$, we define inductively the polynomials $P^t_j\in\mathbb R[X]$ of degree $\leq (r-t)D+j+t-1$ with
the property 
\begin{equation}\label{curulet}
F_j(x)=\Delta_D^{t} ((x)_{D(t+1)}P^{t}_j(x)), 
\end{equation}
where $\Delta_D^t$ is the $t$-th power of the operator $\Delta_D$. 
We let $P^0_j:=P_j$, $1\leq j\leq r$. Let $1 \leq t\leq r-2$ and $j\leq r-t-1$. Assume that \eqref{curulet} holds for $j$, $j+1$ and $t$, i.e.
\begin{equation}\label{curuletu}
F_{j}(x)=\Delta_D^{t-1} ((x)_{Dt}P^{t-1}_{j}(x)) =
 \sum_{\ell=0}^{t-1} \binom{t-1}{\ell}(-1)^{t-1-\ell}(x+\ell D)_{Dt}P^{t-1}_j(x+\ell D) .
\end{equation}
\begin{equation}\label{curuletu2}
F_{j+1}(x)=\Delta_D^{t-1} ((x)_{Dt}P^{t-1}_{j+1}(x)) =
 \sum_{\ell=0}^{t-1} \binom{t-1}{\ell}(-1)^{t-1-\ell}(x+\ell D)_{Dt}P^{t-1}_{j+1}(x+\ell D) .
\end{equation}
Since, by \eqref{curul}, $x\Delta_D(F_j(x))=\Delta_D(F_{j+1}(x))$, from \eqref{curuletu} and \eqref{curuletu2} it follows that
$$ \sum_{\ell=0}^{t-1} \binom{t-1}{\ell}(-1)^{t-1-\ell} [ x(x+(\ell+1) D)_{Dt}P^{t-1}_j(x+(\ell+1) D) - x(x+\ell D)_{Dt}P^{t-1}_j(x+\ell D) - $$
\begin{equation}\label{curr}
- (x+(\ell+1) D)_{Dt}P^{t-1}_{j+1}(x+(\ell+1) D) + (x+\ell D)_{Dt}P^{t-1}_{j+1}(x+\ell D)]=0.
\end{equation}
Evaluating \eqref{curr} in $x=0,1,\ldots, D-1$, it follows that
\begin{equation}\label{l81}
P^{t-1}_{j+1}(tD+\ell)=\ell P^{t-1}_{j}(tD+\ell), \; 0\leq \ell\leq D-1.
\end{equation}
Hence, there exists a polynomial $P^{t}_j\in \mathbb R[X]$ of degree $\leq (r-t)D+j+t-1$ such that
\begin{equation}\label{l82}
(x-tD)P^{t-1}_{j}(x) - P^{t-1}_{j+1}(x) = tD(x-tD)_D P^{t}_{j}(x).
\end{equation}
From \eqref{curul}, \eqref{curuletu}, \eqref{curuletu2} and \eqref{l82} it follows that 
$$0 = \Delta_D F_{j+1}x- x\Delta_D F_j(x) =  \Delta_D^{t} ((x)_{Dt}P^{t-1}_{j+1}(x)) - x \Delta_D^{t} ((x)_{Dt}P^{t-1}_{j}(x)) = $$
\begin{equation}\label{l88}
 = tD \Delta_D^t ( (x)_{D(t+1)}P^{t}_j(x) ) + \Delta_D^t ( (x-tD)P^{t-1}_{j}(x) ) - x \Delta_D^{t} ((x)_{Dt}P^{t-1}_{j}(x)).
\end{equation}
On the other hand, one can easily check that
\begin{equation}\label{l89}
x \Delta_D^t((x)_{Dt}P^{t-1}_{j}(x)) - \Delta_D^t ( (x-tD)(x)_{Dt}P^{t-1}_{j}(x) ) = tD \Delta_D^{t-1}((x)_{Dt}P^{t-1}_{j}(x)) = tD F_j(x),
\end{equation}
From \eqref{l88} and \eqref{l89}, it follows that
$F_j(x)=\Delta_D^t ( (x)_{D(t+1)}P^{t}_j(x) ),$ 
hence the induction step is complete. 

Finally, $P(x):=P^{r-1}_1(x)$, which is a polynomial of degree $\leq r-1$, satisfies the conclusion.
\end{proof}

In the following example, we explicit the proof of Lemma $2.8$ for $r=1,2$:

\begin{exm}\emph{
 (1) Assume $r=1$ and $D\geq 1$. We have that 
     $$F_1(x)=\Phi_1(\frac{x}{D}) = b_1 B_1(\frac{x}{D}) + \frac{b_2}{2}B_2(\frac{x}{D}) + \cdots +\frac{b_D}{D}B_D(\frac{x}{D}).$$
     From the assumption of Lemma $2.8$, that is $F_1(0)=\ldots=F_D(D-1)=0$, it follows that there exists a constant $c\in \mathbb Q$ such that
     $F_1(x)=c(x)_D$, in other words $P(x)=c$.}

\emph{
  (2) Assume $r=1$ and $D\geq 1$. We have that 
      \begin{align*}
      & F_1(x)=b_1 B_1(\frac{x}{D}) + \frac{b_2}{2}B_2(\frac{x}{D}) + \cdots +\frac{b_{2D}}{2D}B_{2D}(\frac{x}{D}),\\
      & F_2(x)=D\left(\frac{b_1}{2} B_2(\frac{x}{D}) + \frac{b_2}{3}B_3(\frac{x}{D}) + \cdots +\frac{b_{2D}}{2D+1}B_{2D+1}(\frac{x}{D})\right).\\
      \end{align*}
      Hence, from $(2.5)$ it follows that 
      \begin{equation}\label{pasat}
      \Delta_D F_2(x) =  b_1 x + b_2 \frac{x^2}{D} + \cdots + b_{2D} \frac{x^{2D}}{D^{2D-1}} = x \Delta_D F_1(x).
      \end{equation}
      On the other hand, from the hypothesis of Lemma $2.8$, we can write
      \begin{equation}\label{pass}
      F_1(x)=(x)_D P_1(x),\;F_2(x)=(x)_D P_2(x),
      \end{equation}
      where $P_1$ and $P_2$ are two polynomials of degrees $\leq D$ and $\leq D+1$, respectively. Hence, from \eqref{pasat} and \eqref{pass}, we get
      \begin{equation}\label{pasata}
      (x+D)_D P_2(x+D) - (x)_D P_2(x) = x((x+D)_D P_1(x+D) - (x)_D P_1(x)).
      \end{equation}
      Evaluating \eqref{pasata} in $x=0,1,\ldots,D-1$, it follows that 
      \begin{equation}\label{pasataa}
       P_2(\ell+D)=\ell P_1(\ell+D)\text{ for }0\leq \ell \leq D-1.
      \end{equation}
      From \eqref{pasataa} it follows that there exists a polynomial $P$ of degree $\leq 1$ such that
      \begin{equation}\label{pasatt}
      (x-D)P_1(x) - P_2(x) = D(x-D)_D P(x).
      \end{equation}
      From \eqref{pass} and \eqref{pasatt}, it follows that 
      \begin{equation}\label{passatt}
      (x-D)F_1(x)-F_2(x) = D(x)_{2D} P(x).
      \end{equation}
      Applying $\Delta_D$ in \eqref{passatt}, from \eqref{pasat} it follows that
      $$\Delta_D( (x-D)F_1(x)-F_2(x) ) = x\Delta_D F_1(x) + DF_1(x) - \Delta_D F_2(x) = DF_1(x)=D\Delta_D((x)_{2D}P(x)),$$
      therefore $F_1(x)=\Delta_D((x)_{2D}P(x))$, as required.}
\end{exm}

\begin{lema}
Let $S:[0,r]\rightarrow \mathbb R$ be a function and let $1\leq j\leq r$. Then:
$$ x^{j-1}\Delta^r S(x) = \Delta(A_{j,0}(x) S(x) + A_{j,1}(x) S(x+1) + \cdots + A_{j,r-1}(x) S(x+r-1)),$$
where $A_{j,\ell}$'s are polynomials of degree $j-1$, defined by
$$A_{j,\ell}(x)=(-1)^{r-1}\sum_{t=0}^{\ell}(-1)^t \binom{r}{t} (x+\ell-t)^{j-1} = (-1)^{r+j}A_{j,r-1-\ell}(1-x),\;0\leq \ell\leq r-1.$$
\end{lema}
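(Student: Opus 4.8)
The plan is to verify the identity by computing the right-hand side directly and showing it telescopes into the left-hand side. Write
$$G_j(x) := A_{j,0}(x)S(x) + A_{j,1}(x)S(x+1) + \cdots + A_{j,r-1}(x)S(x+r-1) = \sum_{\ell=0}^{r-1} A_{j,\ell}(x) S(x+\ell),$$
so that the claim is $x^{j-1}\Delta^r S(x) = \Delta G_j(x) = G_j(x+1) - G_j(x)$. Expanding $\Delta^r S(x) = \sum_{k=0}^{r}(-1)^{r-k}\binom{r}{k}S(x+k)$, the strategy is to collect, in both $x^{j-1}\Delta^r S(x)$ and in $\Delta G_j(x)$, the coefficient of each fixed shift $S(x+k)$ for $0\leq k\leq r$, and check the two polynomial coefficients agree identically in $x$.

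First I would record the definition $A_{j,\ell}(x) = (-1)^{r-1}\sum_{t=0}^{\ell}(-1)^t\binom{r}{t}(x+\ell-t)^{j-1}$, and note the two boundary cases: $A_{j,0}(x) = (-1)^{r-1}x^{j-1}$ and (using $\sum_{t=0}^{r}(-1)^t\binom{r}{t}=0$ together with the telescoping structure) $A_{j,r-1}(x+1) = (-1)^{r-1}\sum_{t=0}^{r-1}(-1)^t\binom{r}{t}(x+r-t)^{j-1} = -(-1)^{r-1}(-1)^r x^{j-1} = x^{j-1}$ after reindexing — i.e. the extreme shifts $S(x)$ and $S(x+r)$ come out with coefficients $-(-1)^{r-1}x^{j-1}$ and $(-1)^r x^{j-1}\cdot(-1)^{?}$ matching $(-1)^{r}\cdot(-1)^{r-1}$-type signs in $\Delta^r$. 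Then, for the intermediate shifts, in $\Delta G_j(x)$ the coefficient of $S(x+k)$ (for $1\leq k\leq r-1$) is $A_{j,k-1}(x+1) - A_{j,k}(x)$, and I would compute
$$A_{j,k-1}(x+1) - A_{j,k}(x) = (-1)^{r-1}\left[\sum_{t=0}^{k-1}(-1)^t\binom{r}{t}(x+k-t)^{j-1} - \sum_{t=0}^{k}(-1)^t\binom{r}{t}(x+k-t)^{j-1}\right] = (-1)^{r-1}\cdot(-1)\cdot(-1)^k\binom{r}{k}(x)^{j-1},$$
which equals $(-1)^{r-k}\binom{r}{k}x^{j-1}$, precisely the coefficient of $S(x+k)$ in $x^{j-1}\Delta^r S(x)$. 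The key algebraic fact being exploited is that the two sums defining $A_{j,k-1}(x+1)$ and $A_{j,k}(x)$ differ by exactly one term once the shift $x\mapsto x+1$ in the first is absorbed by reindexing $t\mapsto t$ against the range.

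The main obstacle is purely bookkeeping: matching the index ranges after the shift $A_{j,k-1}(x+1)$, since the substitution turns $(x+1+\ell-t)^{j-1}$ into terms that must be compared against $A_{j,k}(x)$'s summands $(x+k-t)^{j-1}$; one must check the ranges $0\le t\le k-1$ versus $0\le t\le k$ line up so that the difference is the single term $t=k$. I would handle the endpoint shifts $k=0$ and $k=r$ separately (as above, using $A_{j,0}(x)=(-1)^{r-1}x^{j-1}$ and the identity $\sum_{t=0}^r(-1)^t\binom{r}{t}=0$ to evaluate $A_{j,r-1}(x+1)$), and the stated symmetry $A_{j,\ell}(x) = (-1)^{r+j}A_{j,r-1-\ell}(1-x)$ follows from the substitution $t\mapsto r-t$ in the defining sum combined with $\sum_{t}(-1)^t\binom{r}{t}(\cdots)=0$ to discard the extra terms; alternatively it can be left as a direct check since it is not needed for the main identity. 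The degree claim — that each $A_{j,\ell}$ has degree $j-1$ — is immediate since $A_{j,\ell}$ is an integer combination of the degree-$(j-1)$ polynomials $(x+\ell-t)^{j-1}$, with leading coefficient $(-1)^{r-1}\sum_{t=0}^{\ell}(-1)^t\binom{r}{t}$, which is nonzero for $\ell<r$ but in any case the degree is at most $j-1$ and this suffices for the later applications.
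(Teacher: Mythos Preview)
Your approach is essentially the same as the paper's: expand $\Delta G_j(x)$, collect the coefficient of each $S(x+k)$, and verify that it equals $(-1)^{r-k}\binom{r}{k}x^{j-1}$. Your computation of $A_{j,k-1}(x+1)-A_{j,k}(x)$ for the intermediate shifts $1\le k\le r-1$ is clean and correct, and the paper does the same (it phrases it as the recurrence \eqref{l54} and then solves for $A_{j,\ell}$, whereas you start from the given formula and verify; these are equivalent).

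One point that needs a bit more than you give it: the endpoint $k=r$ requires $A_{j,r-1}(x+1)=x^{j-1}$, which amounts to
\[
\sum_{t=0}^{r}(-1)^t\binom{r}{t}(x+r-t)^{j-1}=0.
\]
You cite only $\sum_{t=0}^{r}(-1)^t\binom{r}{t}=0$, which is the $j=1$ case; for general $1\le j\le r$ you need the fact that the $r$-th forward difference of any polynomial of degree $j-1<r$ vanishes. This is standard and easy (e.g.\ by induction on $j$ via differentiation, which is exactly what the paper does in proving its $\Psi_j\equiv 0$), but it is the one nontrivial ingredient and should be stated. The same identity is what makes your sketch of the symmetry $A_{j,\ell}(x)=(-1)^{r+j}A_{j,r-1-\ell}(1-x)$ go through after the substitution $t\mapsto r-t$; the paper proves the symmetry explicitly using $\Psi_j=0$, and since the symmetry is part of the statement (and is used later in Proposition~2.16), it should not be dismissed as ``not needed.''
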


\begin{proof}
We have that
\begin{equation}\label{l51}
\Delta^{r} S(x) = \sum_{\ell=0}^{r} (-1)^{r-\ell}\binom{r}{\ell}S(x+\ell).
\end{equation}
On the other hand, 
\begin{equation}\label{l52}
\Delta(A_{j,\ell}(x)S(x+\ell)) = A_{j,\ell}(x+1)S(x+\ell+1)-A_{j,\ell}(x)S(x+\ell).
\end{equation}
From hypothesis, \eqref{l51} and \eqref{l52} it follows that
\begin{equation}\label{l53}
 \sum_{\ell=0}^{r} (-1)^{r-\ell}\binom{r}{\ell}S(x+\ell)x^{j-1} = \sum_{\ell=0}^{r-1} (A_{j,\ell}(x+1)S(x+\ell+1)-A_{j,\ell}(x)S(x+\ell)).
\end{equation}
The relation \eqref{l53} is satisfied for {\small
\begin{equation}\label{l54}
A_{j,\ell-1}(x+1)-A_{j,\ell}(x) = (-1)^{r-\ell}\binom{r}{\ell}x^{j-1},1\leq \ell\leq r-1,\;A_{j,0}(x)= (-1)^{r-1}x^{j-1}
,A_{j,r-1}(x+1)=x^{j-1}.
\end{equation}}
From \eqref{l54} it follows, inductively, that 
\begin{equation}\label{l55}
A_{j,\ell}(x)= A_{j,\ell-1}(x+1)+(-1)^{r-\ell-1}\binom{r}{\ell}x^{j-1} = \sum_{t=0}^{\ell}(-1)^{r-t-1}\binom{r}{t}(x+\ell-t)^{j-1},\;
1\leq \ell \leq r-1.
\end{equation}
We prove that the polynomials
\begin{equation}\label{l56}
\Psi_{j}(x):=- \sum_{t=0}^{r}(-1)^{r-t}\binom{r}{t}(x+r-t)^{j-1},\;1\leq j\leq r,
\end{equation}
are zero, using induction on $j\geq 1$. 
For $j=1$, we have $$\Psi_1(x)=- \sum_{t=0}^{r}(-1)^{r-t}\binom{r}{t} = -(1-1)^t = 0.$$
Now, assume $2\leq j\leq r$. By induction hypothesis, it follows that
$$\Psi'_j(x) = (j-1) \Psi_{j-1}(x)=0,\;\text{ for all }x\in\mathbb R,$$
hence $\Psi_j$ is constant. On the other hand,
$$\Psi_j(0)= - \sum_{t=0}^r (-1)^{r-t}\binom{r}{t} (r-t)^{j-1} = - \sum_{t=0}^r (-1)^t \binom{r}{t} t^{j-1} $$
For any $0\leq t\leq r$, we write 
$$t^{j-1}=c_{j-1}(t)_{j-1}+\cdots + c_2(t)_2+c_1t+c_0,\;$$
where $c_k$'s are uniquely determined by $j$. It follows that
$$\Psi_j(0) = - \sum_{t=0}^r (-1)^t \binom{r}{t} \sum_{k=0}^{j-1} c_k(t)_{k} = - \sum_{k=0}^{j-1} c_k \sum_{t=0}^r (-1)^t \binom{r}{t}(t)_{k}
= $$
$$ = - \sum_{k=0}^{j-1} c_k(r)_{k} \sum_{t=0}^r (-1)^t \binom{r-k}{t-k}  \sum_{k=0}^{j-1} (-1)^{k-1} c_k(r)_{k} = $$
$$ = \sum_{\ell=0}^{r-k} (-1)^{\ell} \binom{r-k}{\ell} \sum_{k=0}^{j-1} (-1)^{k-1} c_k(r)_{k} (1-1)^{r-k} = 0,$$
hence $\Psi_j=0$ as required. In particular, we have
$$A_{j,r-1}(x) = \Psi_j(x)+x^{j-1} = x^{j-1},$$
hence \eqref{l54} is satisfied. Moreover, since $\Psi_j=0$, from \eqref{l55} and \eqref{l56} it follows that
$$A_{j,\ell}(x) = - \sum_{t=\ell+1}^{r}(-1)^{r-t-1}\binom{r}{t}(x+\ell-t)^{j-1},\;\text{ hence } A_{j,r-1-\ell}(1-x) =$$
\begin{equation}\label{l57}
  = - \sum_{t=r-\ell}^{r}(-1)^{r-t-1}\binom{r}{t}(1-x+r-1-\ell-t)^{j-1} = 
(-1)^j \sum_{t=r-\ell}^{r}(-1)^{r-t-1}\binom{r}{t} (x+\ell-r+t)^{j-1}.
\end{equation}
Substituting $k=r-t$ in \eqref{l57}, it follows that
$$A_{j,r-1-\ell}(1-x) = (-1)^j \sum_{k=0}^{\ell}(-1)^{k-1}\binom{r}{r-k}(x+\ell-k)^{j-1} = $$
$$ = (-1)^{j+r} \sum_{k=0}^{\ell}(-1)^{r-k-1}\binom{r}{k}(x+\ell-k)^{j-1} = (-1)^{j+r} A_{j,\ell}(x),$$
as required.
\end{proof}

\begin{teor}
With the notations from $(2.11)$, assume that
$$\Phi_j(0)=\Phi_j(\frac{1}{D})=\cdots=\Phi_j(\frac{D-1}{D})=0,\; \text{ for all }1\leq j\leq r.$$
There exists a polynomial $Q\in\mathbb R[x]$ of degree $\leq r-1$ such that 
$$\Phi_j(x)=A_{j,0}(x)S(x) + A_{j,1}(x)S(x+1) + \cdots + A_{j,r-1}(x)S(x+r-1),$$
where $S(x)=(Dx)_{Dr}Q(x)$ and 
$$A_{j,\ell}(x)=(-1)^{r-1}\sum_{t=0}^{\ell}(-1)^t \binom{r}{t} (x+\ell-t)^{j-1},\; \text{ for all }1\leq j\leq r, 0\leq \ell\leq r-1.$$
\end{teor}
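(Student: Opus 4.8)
The plan is to reduce the statement to Lemmas 2.8 and 2.10 via the dilation $x\mapsto Dx$. Recall from Lemma 2.8 that, with $F_j(x):=D^{j-1}\Phi_j(\frac{x}{D})$, the hypothesis produces a polynomial $P\in\mathbb R[X]$ of degree $\le r-1$ with $F_1(x)=\Delta_D^{r-1}((x)_{Dr}P(x))$ and $\Delta_D F_j(x)=x^{j-1}\Delta_D F_1(x)$ for all $1\le j\le r$ (the case $j=1$ being trivial). First I would set $Q(x):=P(Dx)$, which still has degree $\le r-1$, and $S(x):=(Dx)_{Dr}Q(x)$, so that $S(x)=H(Dx)$ where $H(x):=(x)_{Dr}P(x)$.

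The elementary observation that makes the translation work is that for any function $H$ and any $k\ge 0$ one has $(\Delta_D^{k}H)(Dx)=\Delta^{k}\bigl(H(D\,\cdot)\bigr)(x)$, which follows immediately by induction on $k$ since $\Delta_D$ shifts its argument by $D$ while $\Delta$ shifts by $1$. Applying it with $k=r-1$ yields
$$\Phi_1(x)=F_1(Dx)=\bigl(\Delta_D^{r-1}H\bigr)(Dx)=\Delta^{r-1}S(x).$$
Substituting $x\mapsto Dx$ in $\Delta_D F_j(x)=x^{j-1}\Delta_D F_1(x)$, using $F_j(Dx)=D^{j-1}\Phi_j(x)$ and cancelling $D^{j-1}$, yields $\Delta\Phi_j(x)=x^{j-1}\Delta\Phi_1(x)=x^{j-1}\Delta^{r}S(x)$ for every $1\le j\le r$.

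Next I would apply Lemma 2.10 to this $S$, which gives $x^{j-1}\Delta^{r}S(x)=\Delta\bigl(A_{j,0}(x)S(x)+A_{j,1}(x)S(x+1)+\cdots+A_{j,r-1}(x)S(x+r-1)\bigr)$. Comparing with the previous display, the polynomial $\Phi_j(x)-\sum_{\ell=0}^{r-1}A_{j,\ell}(x)S(x+\ell)$ is annihilated by $\Delta$ and therefore equals a constant $c_j$. To show $c_j=0$ I would evaluate at $x=0$: for $0\le\ell\le r-1$ we have $S(\ell)=(D\ell)_{Dr}Q(D\ell)=0$, since $0\le D\ell\le D(r-1)\le Dr-1$ forces one of the factors of $(D\ell)_{Dr}=D\ell(D\ell-1)\cdots(D\ell-Dr+1)$ to vanish; hence $c_j=\Phi_j(0)$, which is $0$ by hypothesis. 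This is precisely the asserted identity.

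I do not expect a genuine obstacle here, since the substantive work is already contained in Lemmas 2.8 and 2.10 and the theorem is essentially their assembly. The two points that require care are the bookkeeping — keeping the operators $\Delta_D$ and $\Delta$ and the powers of $D$ in $F_j$ versus $\Phi_j$ straight under the substitution $x\mapsto Dx$ — and the final observation that $S$ vanishes at $0,1,\dots,r-1$, which is exactly what forces the additive constants $c_j$ to be zero.
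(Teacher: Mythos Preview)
Your argument is correct and follows the paper's proof essentially verbatim: invoke Lemma~2.8, pass from $\Delta_D$ to $\Delta$ via the dilation $x\mapsto Dx$, apply Lemma~2.10, and kill the integration constant using $\Phi_j(0)=0$ together with $S(0)=\cdots=S(r-1)=0$. One cosmetic slip: you write $S(\ell)=(D\ell)_{Dr}Q(D\ell)$, but with $S(x)=(Dx)_{Dr}Q(x)$ it should be $Q(\ell)$; this is harmless since the vanishing comes from the factor $(D\ell)_{Dr}$ anyway.
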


\begin{proof}
From Lemma $2.8$ it follows that there exists a polynomial $P\in \mathbb R[x]$ of degree $\leq r-1$ such that
\begin{equation}\label{t1}
\Phi_1(x)=F_1(Dx) = \Delta_D^{r-1} (Dx)_{rD}P(Dx).
\end{equation}
We let $$Q(x):=\frac{1}{D^{Dr}}P(Dx) \text{ and } S(x):=(Dx)_{Dr}Q(x).$$ 
From \eqref{t1} it follows that 
$\Phi_1(x)=\Delta^{r-1} S(x)$. Moreover, according to \eqref{curul}, we have
\begin{equation}\label{t2}
\Delta \Phi_j(x) = \frac{1}{D^{j-1}}\Delta_D F_j(Dx) =x^{j-1} \Delta_D F_1(Dx) = x^{j-1}\Delta \Phi_1(x),\;\text{ for all }2\leq j\leq r.
\end{equation}
From \eqref{t1}, \eqref{t2} and Lemma $2.10$ it follows that
$$\Delta \Phi_j(x) = \Delta (A_{j,0}(x)S(x)+\cdots+A_{j,r-1}(x)S(x+r-1)),\;\text{ for all }1\leq j\leq r.$$
Since $\Phi_j(0)=0$, $\text{ for all }1\leq j\leq r$, and $S(0)=\cdots=S(r-1)=0$, it follows that
$$\Phi_j(x)=A_{j,0}(x)S(x) + A_{j,1}(x)S(x+1) + \cdots + A_{j,r-1}(x)S(x+r-1),$$
as required.
\end{proof}

\begin{cor}
If $r=2$ and $$\Phi_j(0)=\Phi_j(\frac{1}{D})=\cdots=\Phi_j(\frac{D-1}{D})=0,\; j=1,2$$
then $\Phi_1=\Phi_2=0$. Consequently, $\Delta_{2,D}\neq 0$.
\end{cor}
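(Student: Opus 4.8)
The plan is to feed Theorem $2.11$ (with $r=2$) into the single extra fact that $\int_0^1 B_n(t)\,dt=0$ for $n\ge 1$, which by the definition of $\Phi_j$ gives $\int_0^1\Phi_1(x)\,dx=\int_0^1\Phi_2(x)\,dx=0$; the claim is that these two linear relations, together with the shape of $\Phi_j$ coming from Theorem $2.11$ (which already encodes the vanishing at the $D$ prescribed points), force $\Phi_1=\Phi_2=0$.

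First I would specialize the coefficients of Theorem $2.11$: for $r=2$ one computes $A_{1,0}(x)=-1$, $A_{1,1}(x)=1$, $A_{2,0}(x)=-x$, $A_{2,1}(x)=x-1$, so that there is a polynomial $Q$ with $\deg Q\le 1$ and, writing $S(x)=(Dx)_{2D}Q(x)$,
$$\Phi_1(x)=S(x+1)-S(x),\qquad \Phi_2(x)=(x-1)S(x+1)-xS(x)=(x-1)\Phi_1(x)-S(x).$$
Parametrizing $Q(x)=a_0+a_1(x-2)$ and substituting $t=Dx$, the two conditions $\int_0^1\Phi_j\,dx=0$ become a homogeneous $2\times 2$ system in $(a_0,a_1)$ whose entries are rational multiples of integrals of $(t)_{2D}$, $(t)_{2D+1}=(t)_{2D}(t-2D)$ and $(t)_{2D+2}$ over $[0,D]$ and $[D,2D]$. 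It remains to show this system is nonsingular.

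The main tools are the difference identity $\Delta\!\left((t)_{2D+2}\right)=(2D+2)(t)_{2D+1}$, the oddness of $(t)_{2D+1}$ about $t=D$, and the evenness of $(t)_{2D+2}$ about $t=D+\tfrac12$. Using these I expect to establish the \emph{exact} identity $\int_0^1\Phi_2(x)\,dx=0$ whenever $Q(2)=0$ (i.e. $a_0=0$); equivalently, the functional $S\mapsto\int_0^1\Phi_2$ on the two-dimensional space $\{(Dx)_{2D}Q:\deg Q\le 1\}$ is a multiple of $Q\mapsto Q(2)$. Granting also that this functional is not identically zero — which amounts to $\int_0^D(t)_{2D}(t-D)\,dt\neq 0$ — the relation $\int_0^1\Phi_2=0$ forces $a_0=0$, hence $S(x)=a_1(Dx)_{2D}(x-2)$; then $\int_0^1\Phi_1=0$ collapses to $a_1\,c_D=0$ with $c_D=-\tfrac{2}{D^2}\int_0^D(t)_{2D+1}\,dt$, so $a_1=0$ as soon as $c_D\neq 0$. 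Thus $Q=0$, hence $\Phi_1=\Phi_2=0$, and Proposition $2.3$ then yields $\Delta_{2,D}\neq 0$.

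The hard part is the nonvanishing of the two ``Bernoulli-type'' integrals left over, namely $\int_0^D(t)_{2D}(t-D)\,dt$ and $\int_0^D(t)_{2D+1}\,dt=-\tfrac12\int_0^{D^2}\prod_{m=1}^D(w-m^2)\,dw$. These are the $r=2$ counterparts of the inequality $\int_0^D(x)_D\,dx>0$ proved in Lemma $2.5$, and I expect them to require the same parity-sensitive analysis: after centering the falling-factorial factor one kills the oscillating part by an odd-function substitution when $D$ is odd, and one uses the integration-by-parts identity from the proof of Lemma $2.5$ when $D$ is even (alternatively, one may try to identify these integrals with explicit Bernoulli-number expressions and check nonvanishing directly).
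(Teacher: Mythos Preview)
Your proposal is essentially the paper's own argument: specialize Theorem~2.11 to $r=2$, impose $\int_0^1\Phi_j\,dx=0$ from $(2.4)$, and use the reflection $x\mapsto 1-x$ (resting on the identity $(D-Dy)_{2D}=(Dy+D-1)_{2D}$) to reduce everything to the nonvanishing of a falling-factorial integral, which is then handled by the parity trick of Lemma~2.5. The one organizational difference is that the paper pushes the symmetry a step further and collapses \emph{both} constraints to the single integral $\int_0^1 x(Dx+D-1)_{2D}\,dx$ (first obtaining $2a+b=0$, i.e.\ your $a_0=0$, then $a=0$ from the very same integral), so only one nonvanishing needs to be checked rather than the two you list; your flagged ``hard part'' is precisely the place where the paper invokes Lemma~2.5.
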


\begin{proof}
From (2.4) it follows that $\int_0^1 \Phi_1(x)\dx = \int_0^1 \Phi_2(x)\dx=0$.
According to Theorem $2.11$ we have
$$\Phi_1(x)=\Delta((Dx)_{2D}Q(x)) = (Dx+D)_{2D}Q(x+1)-(Dx)_{2D}Q(x),$$ 
$$\Phi_2(x)= (x-1)(Dx+D)_{2D}Q(x+1) - x(Dx)_{2D}Q(x),$$
where $Q(x)=ax+b$, with $a,b\in\mathbb R$. It follows that
\begin{equation}\label{c61}
\int_{0}^1 \Phi_2(x) dx = \int_{0}^1 (x-1)(Dx+D)_{2D}Q(x+1) \dx - \int_0^1 x(Dx)_{2D}Q(x)dx.
\end{equation}
Using the substitution $y=1-x$  and the identity 
$(D-Dy)_{2D} = (Dy+D-1)_{2D}$, we get
\begin{equation}\label{c62}
 \int_0^1 x(Dx)_{2D}Q(x)dx = - \int_0^1 (y-1)(Dy+D-1)_{2D}Q(1-y)dy. 
\end{equation}
From \eqref{c61} and \eqref{c62} it follows that 
$$\int_{0}^{1} \Phi_2(x) \dx = \int_0^1 (Dx+D-1)_{2D}((x+1)Q(x+1)+(x-1)Q(1-x))\dx = $$
\begin{equation}\label{c63}
= (4a+2b)\int_0^1 x(Dx+D-1)_{2D}dx = 0.
\end{equation}
Using a similar argument as in the formula $(2.13)$, we get $$\int_0^1 x(Dx+D-1)_{2D}dx = 0,$$ hence $2a+b=0 \Rightarrow Q(x)=a(x-2)$.
It follows that
\begin{equation}\label{c64}
 \int_0^1 \Phi_1(x) dx = a \int_{0}^1 ((Dx+D)_{2D}(x-1)-(Dx)_{2D}(x-2)) \dx
\end{equation}
Using the substitution $y=1-x$ , we get
$$\int_0^1 (Dx)_{2D}(x-2) \dx = - \int_0^1 (Dy+D-1)_{2D}(y-1)dy,$$
hence, by \eqref{c64},
$$ \int_0^1 \Phi_1(x) \dx = 2a \int_0^1 x(Dx+D-1)_{2D}dx =0, $$
which implies, as above, that $a=0$. Therefore $b=0$, hence $\Phi_1=\Phi_2=0$. The last assertion follows from Proposition $2.3$.
\end{proof}

Assume $r\geq 2$. Let $Q\in\mathbb R[x]$ be the polynomial from the statement of Theorem $2.11$. We write
\begin{equation}\label{cucu1}
Q(x)=a_0+a_1(x-r)+a_2(x-r)(x-\frac{r}{2})+\cdots+a_{r-1}(x-r)(x-\frac{r}{2})^{r-2}.
\end{equation}
We let 
\begin{equation}\label{cucu2}
S_t(x):=\begin{cases} (x)_{Dr},& t=0\\ (x-r)(x-\frac{r}{2})^{t-1}(x)_{Dr}, &1\leq t\leq r-1  \end{cases}.
\end{equation}
We define 
\begin{equation}\label{cucu3}
\Phi_{j,t}(x):=A_{j,0}(x)S_t(x)+A_{j,1}(x)S_t(x+1)+\cdots+A_{j,r-1}(x)S_t(x+r-1),\;1\leq j\leq r,0\leq t\leq r-1.
\end{equation}
From \eqref{cucu1}, \eqref{cucu2}, \eqref{cucu3} and Theorem $2.11$ it follows that
\begin{equation}\label{cucu4}
\Phi_j(x)=a_0 \Phi_{j,0}(x)+ a_1 \Phi_{j,1}(x) + \cdots +a_{r-1} \Phi_{j,r-1}(x),\;\text{ for all }1\leq j\leq r.
\end{equation}
We let $$I_{j,t}:=\int_0^1 \Phi_{j,t}(x)\dx,\;1\leq j\leq r,0\leq t\leq r-1.$$
From (2.4) and \eqref{cucu4} it follows that
\begin{equation}\label{cucu5}
\int_0^1 \Phi_j(x)\dx = a_0 I_{j,0} + a_1 I_{j,1} + \cdots + a_{r-1} I_{j,r-1}=0,\;\text{ for all }1\leq j\leq r.
\end{equation}
With the above notations, we have the following result.

\begin{cor}
$\Delta'_{r,D}:=\begin{vmatrix}
I_{1,0} & I_{1,1} & \cdots & I_{1,r-1} \\ 
I_{2,0} & I_{2,1} & \cdots & I_{2,r-1} \\ 
\vdots & \vdots & \ddots  & \vdots \\ 
I_{r,0} & I_{r,1} & \cdots & I_{r,r-1}
\end{vmatrix} \neq 0 $ if and only if $\Phi_1=\Phi_2=\cdots=\Phi_r=0$.
\end{cor}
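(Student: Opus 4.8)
The plan is to read the corollary under the hypothesis standing since Theorem $2.11$, namely $\Phi_j(0)=\Phi_j(\tfrac1D)=\cdots=\Phi_j(\tfrac{D-1}D)=0$ for all $1\le j\le r$, and to prove that under it one has $\Phi_1=\cdots=\Phi_r=0$ precisely when $\Delta'_{r,D}\ne 0$. The link between the two is the $r\times r$ system \eqref{cucu5}. By Theorem $2.11$ the scalars $b_1,\dots,b_{rD}$ produce a polynomial $Q$ of degree $\le r-1$, which \eqref{cucu1} writes with coefficient vector $\mathbf{a}=(a_0,\dots,a_{r-1})$; setting $S=(Dx)_{Dr}Q$, so that $\Phi_j=\sum_{\ell=0}^{r-1}A_{j,\ell}(x)S(x+\ell)$, equation \eqref{cucu5} says exactly that $M\mathbf{a}=0$, where $M:=(I_{j,t})_{1\le j\le r,\,0\le t\le r-1}$ is the matrix with $\det M=\Delta'_{r,D}$. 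Since $1,\,(x-r),\,(x-r)(x-\tfrac r2),\dots,(x-r)(x-\tfrac r2)^{r-2}$ have the distinct degrees $0,1,\dots,r-1$, the map $Q\mapsto\mathbf{a}$ is a linear bijection, so $\mathbf{a}=0\iff Q=0\iff S=0$, and in that case $\Phi_1=\cdots=\Phi_r=0$.

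Given this, the implication ``$\Delta'_{r,D}\ne 0\Rightarrow$ all $\Phi_j=0$'' is immediate: if $\det M\ne 0$ then $M$ is invertible, so \eqref{cucu5} forces $\mathbf{a}=0$, hence $Q=S=0$, hence every $\Phi_j$ vanishes identically.

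For the converse I would argue by contraposition. Suppose $\Delta'_{r,D}=0$; choose a nonzero $\mathbf{a}\in\Ker M$, let $Q\ne 0$ be the polynomial \eqref{cucu1} with these coefficients, and put $S:=(Dx)_{Dr}Q$ and $\Phi_j(x):=\sum_{\ell=0}^{r-1}A_{j,\ell}(x)S(x+\ell)$. It is enough to realize these $\Phi_j$ as the functions \eqref{coor} of a nontrivial tuple $b$ that still obeys the grid vanishing, because then $\Phi_1\ne 0$ contradicts the implication just used. First, for $0\le v\le D-1$ and $0\le\ell\le r-1$ we have $0\le v+D\ell\le Dr-1$, so $(Dx)_{Dr}$ vanishes at $x=\tfrac vD+\ell$; hence $S(\tfrac vD+\ell)=0$ and $\Phi_j(\tfrac vD)=0$ for all $j$, while by Lemma $2.10$ with $j=1$ one has $\Delta\Phi_1=\Delta^{r}S\ne 0$ (since $\deg S\ge Dr\ge r$), so $\Phi_1\not\equiv 0$. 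Next, Lemma $2.10$ gives $\Delta\Phi_j(x)=x^{j-1}\Delta^{r}S(x)=x^{j-1}\Delta\Phi_1(x)$ with $\deg\Delta\Phi_1\le Dr-1$ (because $\deg S\le Dr+r-1$); writing $\Delta\Phi_1(x)=\sum_{i=1}^{Dr}b_ix^{i-1}$ and using $(2.5)$ in the shape $\Delta\big(\tfrac{B_n(x)}{n}\big)=x^{n-1}$, the difference $\Phi_j(x)-\sum_{i=1}^{Dr}b_i\tfrac{B_{i+j-1}(x)}{i+j-1}$ has zero forward difference, hence is a constant $c_j$; integrating over $[0,1]$ and using $(2.4)$, i.e. $\int_0^1\tfrac{B_n(x)}{n}\dx=0$, gives $c_j=\int_0^1\Phi_j(x)\dx$, which by \eqref{cucu4} and the definition of $I_{j,t}$ equals $\sum_t a_tI_{j,t}=(M\mathbf{a})_j=0$. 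Thus $\Phi_j$ is exactly the function \eqref{coor} attached to $b=(b_1,\dots,b_{Dr})$, as required. Finally, since $\Phi_1=0$ alone forces $b=0$ (the polynomials $B_i(x)/i$, $1\le i\le Dr$, having pairwise distinct degrees), Proposition $2.3$ lets one restate the equivalence just proved as $\Delta_{r,D}\ne 0\iff\Delta'_{r,D}\ne 0$.

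The degree bookkeeping and the falling-factorial evaluations are routine. The one genuinely delicate point is the converse direction: one must see that the kernel condition $M\mathbf{a}=0$ is exactly what annihilates the constants $c_j$, so that the manufactured $\Phi_j$ actually lie in the linear span of the Bernoulli polynomials occurring in \eqref{coor} --- in other words, that the correspondence $b\leftrightarrow Q$ supplied by Theorem $2.11$ maps surjectively onto $\Ker M$, not merely into it.
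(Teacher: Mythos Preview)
Your argument is correct and follows the paper's line for the forward direction. For the converse, the paper's proof is two lines: it notes that by Cramer's rule the homogeneous system \eqref{cucu5} has only the trivial solution $a_0=\cdots=a_{r-1}=0$ precisely when $\Delta'_{r,D}\neq 0$, and then asserts the equivalence $a_0=\cdots=a_{r-1}=0\iff\Phi_1=\cdots=\Phi_r=0$. This treats the $\Phi_j$ as parametrized by the $a_t$ via \eqref{cucu3}--\eqref{cucu4}, which makes the equivalence immediate once one observes (as you do) that $\Phi_1=\Delta^{r-1}S$ cannot vanish unless $Q=0$.

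Where you go further than the paper is in reading Corollary~2.13 (and, in effect, Corollary~2.14 simultaneously) under the $b$-parametrization coming from \eqref{coor}: you explicitly manufacture, from a nonzero $\mathbf a\in\Ker M$, a tuple $(b_1,\dots,b_{rD})$ whose associated $\Phi_j$ satisfy the grid vanishing yet have $\Phi_1\not\equiv 0$. The key computation---that the constants $c_j=\int_0^1\Phi_j(x)\,\dx$ vanish exactly because $M\mathbf a=0$---is correct and is exactly the surjectivity step that the paper leaves implicit between Corollaries~2.13 and~2.14. Your degree count $\deg\Delta^r S\le Dr-1$ and the evaluation $(v+D\ell)_{Dr}=0$ for $0\le v\le D-1$, $0\le\ell\le r-1$ are both right. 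So your proof is complete and, in the converse direction, more explicit than the paper's; the two approaches are the same in spirit but yours supplies the missing bijection between grid-vanishing $b$-tuples and kernel vectors $\mathbf a$.
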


\begin{proof}
From \eqref{cucu5} and the Cramer's rule it follows that the system of linear equations
$$ a_0 I_{j,0} + a_1 I_{j,1}+\cdots + a_{r-1} I_{j,r-1}=0,\;1\leq j\leq r,$$
has the unique solution $a_0=a_1=\cdots=a_{r-1}=0$ if and only if $\Delta'_{r,D}\neq 0$.
On the other hand, $a_0=a_1=\cdots=a_{r-1}=0 \Leftrightarrow \Phi_1=\Phi_2=\cdots=\Phi_r=0$.
\end{proof}

\begin{cor}
Let $r,D\geq 1$ be two integers. We have that: $\Delta_{r,D}\neq 0 \Leftrightarrow \Delta'_{r,D}\neq 0$.
Consequently, Conjecture $2.4$ is equivalent to: $\Delta'_{r,D}\neq 0$, for any integers $r,D\geq 1$.
\end{cor}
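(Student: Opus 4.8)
The plan is to show that $\Delta_{r,D}\neq 0$ and $\Delta'_{r,D}\neq 0$ are equivalent to one and the same condition on the polynomials $\Phi_j$ of \eqref{coor}, and then to read off the statement about Conjecture $2.4$ by quantifying over $r$ and $D$. I would first recall Proposition $2.3$: $\Delta_{r,D}\neq 0$ holds if and only if, for every choice of real numbers $b_1,\ldots,b_{rD}$, the vanishing $\Phi_j(0)=\Phi_j(\frac{1}{D})=\cdots=\Phi_j(\frac{D-1}{D})=0$ for all $1\leq j\leq r$ forces $b_1=\cdots=b_{rD}=0$. The next observation is that, for any fixed $b_1,\ldots,b_{rD}$, the condition ``$b_1=\cdots=b_{rD}=0$'' is equivalent to ``$\Phi_1=\Phi_2=\cdots=\Phi_r=0$'': one implication is immediate from \eqref{coor}, and for the converse it is enough that $\Phi_1=0$, because $B_1(x),B_2(x),\ldots,B_{rD}(x)$ have pairwise distinct degrees $1,2,\ldots,rD$, hence are linearly independent over $\mathbb R$, so that $\Phi_1=b_1\frac{B_1}{1}+b_2\frac{B_2}{2}+\cdots+b_{rD}\frac{B_{rD}}{rD}=0$ already implies $b_1=\cdots=b_{rD}=0$. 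Inserting this equivalence into Proposition $2.3$ yields: $\Delta_{r,D}\neq 0$ if and only if, for every choice of $b_1,\ldots,b_{rD}$, the vanishing of all $\Phi_j$ at $0,\frac{1}{D},\ldots,\frac{D-1}{D}$ implies $\Phi_1=\cdots=\Phi_r=0$.

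But this is precisely what Corollary $2.14$ asserts about $\Delta'_{r,D}$, once that corollary is read with the universal quantifier over the admissible $b_1,\ldots,b_{rD}$ made explicit — its standing hypothesis, inherited from Theorem $2.11$, is exactly the clause ``$\Phi_j(0)=\cdots=\Phi_j(\frac{D-1}{D})=0$ for all $1\leq j\leq r$''. Thus $\Delta'_{r,D}\neq 0$ is equivalent to the very same condition, and therefore $\Delta_{r,D}\neq 0\Leftrightarrow\Delta'_{r,D}\neq 0$. The last sentence of the corollary is then immediate: Conjecture $2.4$ is the assertion that $\Delta_{r,D}\neq 0$ for all integers $r,D\geq 1$, and by the equivalence just established this is the same as $\Delta'_{r,D}\neq 0$ for all integers $r,D\geq 1$.

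I do not expect a genuine obstacle here; the argument is essentially logical bookkeeping, and the two points to be careful about are (i) that Corollary $2.14$ must be invoked in its quantified-over-$b_i$ form rather than for a single tuple, and (ii) that the passage between ``$b_1=\cdots=b_{rD}=0$'' and ``$\Phi_1=\cdots=\Phi_r=0$'' is a true equivalence, which is where linear independence of the Bernoulli polynomials enters. It is also worth treating the edge case $r=1$ separately — there $Q$ in Theorem $2.11$ is a constant and $\Delta'_{1,D}$ reduces to the single entry $I_{1,0}$, which is nonzero by a computation of the type used in Lemma $2.5$, matching $\Delta_{1,D}\neq 0$ from Corollary $2.7$ — so that the claimed equivalence also holds (trivially, both sides being true) in that case.
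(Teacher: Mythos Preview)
Your argument is correct and is exactly the paper's approach: combine Proposition~$2.3$ with the preceding corollary, bridging via the (immediate) equivalence between ``$b_1=\cdots=b_{rD}=0$'' and ``$\Phi_1=\cdots=\Phi_r=0$''. The one slip is that where you write ``Corollary~$2.14$'' you mean Corollary~$2.13$ --- as written you are invoking the very statement you are proving; with that corrected, the proof is complete and the extra care you take (the quantifier over the $b_i$, the linear-independence remark, the $r=1$ aside) simply spells out what the paper leaves implicit.
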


\begin{proof}
It follows from Proposition $2.3$ and Corollary $2.13$.
\end{proof}

\begin{obs}
\emph{Although $\Delta'_{r,D}$ is a $r\times r$-determinant, while $\Delta_{r,D}$ is a $rD \times rD$-determinant, the
computation of $\Delta'_{r,D}$ seems difficult and we were unable to prove that $\Delta'_{3,D}\neq 0$ for arbitrary $D\geq 1$.
A small step in the general case is given in the following proposition.}
\end{obs}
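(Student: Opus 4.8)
The statement is a Remark, so there is no self-contained mathematical assertion to prove in the usual sense; what I would do first is isolate the one piece of verifiable content --- the comparison of matrix sizes --- and dispose of it by index counting. For $\Delta'_{r,D}$ I would read off Corollary $2.13$: it is $\det(I_{j,t})$ with $1\le j\le r$ and $0\le t\le r-1$, so the row index $j$ and the column index $t$ each range over exactly $r$ values and the determinant is $r\times r$. For $\Delta_{r,D}$ I would read off $(2.10)$ together with the sentence preceding it: the rows are indexed by the values $0\le n\le rD-1$ fed into $\eqref{poola}$, hence $rD$ rows, while the columns correspond to the indeterminates $d_{\mathbf a,m}(v)$ with $0\le m\le r-1$ and $1\le v\le D$, hence $r\cdot D=rD$ columns. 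Thus $\Delta_{r,D}$ is $rD\times rD$ and the size comparison holds by inspection alone, with no further argument needed.

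The remaining content --- that computing $\Delta'_{r,D}$ is difficult and that $\Delta'_{3,D}\neq 0$ could not be established for arbitrary $D$ --- is a statement about the current state of knowledge, not a theorem, so it carries no proof obligation. The most one can responsibly add is an explanation of \emph{why} the method degenerates at $r=3$. In the cases $r=1$ and $r=2$ (Proposition $2.6$, Corollary $2.12$) the single integral identity $\int_0^1\Phi_j(x)\dx=0$ coming from $(2.4)$, combined with the reflection symmetry $A_{j,r-1-\ell}(1-x)=(-1)^{r+j}A_{j,\ell}(x)$ of Lemma $2.10$ and the vanishing of integrals of the type $\int_{0}^{D-1}(x)_D(x-\frac{D-1}{2})\dx=0$ used in Lemma $2.5$, forces every free coefficient $a_0,\dots,a_{r-1}$ of $Q$ to be zero. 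For $r=3$ the polynomial $Q$ from Theorem $2.11$ still has exactly three free coefficients, matched by the three rows $j=1,2,3$, but the resulting $3\times3$ system no longer collapses to a chain of one-dimensional sign-definite integrals: the parity vanishing of the entries $I_{j,t}$ established in the following proposition removes only some positions, leaving a genuine reduced determinant whose nonvanishing must be controlled uniformly in $D$.

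The real task, therefore, is not to prove the Remark but to upgrade its informal content into the assertion $\Delta'_{3,D}\neq 0$, which is exactly what the authors defer. The plan I would follow is threefold: first, via the substitution $y=1-x$ and the symmetry of $A_{j,\ell}$, pair the summands of each $\Phi_{j,t}$ so that every surviving $I_{j,t}$ reduces to an integral of the form $\int_0^1 g_{j,t}(x)(Dx+c)_{3D}\,\dx$ with $g_{j,t}$ an explicit low-degree polynomial and $c$ an integer constant, mirroring the reduction in the proof of Corollary $2.12$; second, evaluate or estimate these integrals by a positivity argument extending Lemma $2.5$; and third, expand the $3\times3$ determinant $\Delta'_{3,D}$ and show it is nonzero for every $D\geq1$. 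I expect the hard part to be precisely the last positivity step: unlike $r\le2$, no single sign-definite integral dominates the expansion, so one would need either a finer estimate or an exact closed form via Bernoulli-number arithmetic, in the spirit of the $D=1$ analysis carried out in Section $3$, to pin down the sign of $\Delta'_{3,D}$ uniformly in $D$.
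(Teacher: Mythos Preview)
Your analysis is correct: the statement is a Remark and the paper offers no proof for it, so there is nothing to compare against. Your verification of the matrix sizes by index counting and your explanation of why the $r\le 2$ arguments do not extend to $r=3$ accurately reflect the paper's own narrative and the role this Remark plays in motivating Proposition~2.16.
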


\begin{prop}
Let $r,D\geq 1$ be two integers. For any $1\leq t\leq r-1$ and $1\leq j\leq r$ such that $t+(D+1)t+j\equiv 1(\bmod 2)$, we have 
$$I_{j,t}=\int_{0}^1 \Phi_{j,t}(x)\dx = 0.$$
\end{prop}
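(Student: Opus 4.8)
The plan is to exploit a reflection symmetry of the integrand about the midpoint of $[0,1]$. I will show that
$$\Phi_{j,t}(1-x) = (-1)^{r(D+1)+t+j}\,\Phi_{j,t}(x),$$
so that the substitution $x\mapsto 1-x$ in $I_{j,t}=\int_0^1\Phi_{j,t}(x)\dx$ forces $I_{j,t}=(-1)^{r(D+1)+t+j}I_{j,t}$, hence $I_{j,t}=0$ whenever $r(D+1)+t+j$ is odd (which is the parity hypothesis). The global sign above factors into two contributions: one coming from the $A_{j,\ell}$'s, which is handed to us for free by the identity $A_{j,\ell}(x)=(-1)^{r+j}A_{j,r-1-\ell}(1-x)$ of Lemma $2.10$; and one coming from $S_t$, for which I first need a reflection identity about the point $x=\tfrac r2$.

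The technical heart of the proof is the claim that, for $1\le t\le r-1$,
$$S_t(r-x) = (-1)^{Dr+t}\,S_t(x).$$
One cannot verify this factor by factor, since $x-r$ by itself is not (anti)symmetric about $\tfrac r2$; the point is to absorb it into the falling factorial. Writing $(Dx)_{Dr}(x-r)=\tfrac1D(Dx)_{Dr+1}$, we see that $(Dx)_{Dr+1}$ is a polynomial of degree $Dr+1$ whose roots $0,\tfrac1D,\dots,r$ are symmetric about $\tfrac r2$; comparing roots and leading coefficients gives $(D(r-x))_{Dr+1}=(-1)^{Dr+1}(Dx)_{Dr+1}$. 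Together with $(r-x-\tfrac r2)^{t-1}=(-1)^{t-1}(x-\tfrac r2)^{t-1}$ this yields the claimed identity. Note that this uses $1\le t\le r-1$ (so that the factor $(x-r)$ and the power of $(x-\tfrac r2)$ are both present), which is exactly the range in the statement; $S_0$ plays no role here.

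With these two identities the proof becomes a bookkeeping of signs. Substituting $x\mapsto 1-x$ in $I_{j,t}=\int_0^1\Phi_{j,t}(x)\dx$ and using the elementary identity $1-x+\ell=r-\bigl(x+(r-1-\ell)\bigr)$, each summand of $\Phi_{j,t}(1-x)$ transforms as
$$A_{j,\ell}(1-x)\,S_t(1-x+\ell) = (-1)^{r+j}A_{j,r-1-\ell}(x)\cdot(-1)^{Dr+t}\,S_t\bigl(x+(r-1-\ell)\bigr).$$
Since $0\le\ell\le r-1$, the shift $r-1-\ell$ again runs over $\{0,\dots,r-1\}$, so re-indexing $\ell\mapsto r-1-\ell$ turns the sum back into $\sum_{m=0}^{r-1}A_{j,m}(x)S_t(x+m)=\Phi_{j,t}(x)$, now multiplied by the constant $(-1)^{r(D+1)+t+j}=(-1)^{r+j}(-1)^{Dr+t}$. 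This establishes the displayed reflection identity for $\Phi_{j,t}$, and integrating over $[0,1]$ gives the conclusion.

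I expect the only delicate step to be pinning down the sign in the $S_t$ reflection identity — specifically, spotting that $(Dx)_{Dr}(x-r)=\tfrac1D(Dx)_{Dr+1}$, which is what makes the $\tfrac r2$-symmetry visible; after that the argument is entirely routine. It should be stressed that this method only kills those $I_{j,t}$ whose exponent $r(D+1)+t+j$ is odd; the remaining integrals lie in the complementary residue class, and evaluating them (which is what one would need in order to push $\Delta'_{r,D}\neq0$ beyond $r=2$, cf. Remark $2.15$) appears to require an honest computation and is left open.
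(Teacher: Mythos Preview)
Your proof is correct and follows essentially the same approach as the paper: both use the substitution $x\mapsto 1-x$, the reflection identity $A_{j,\ell}(1-x)=(-1)^{r+j}A_{j,r-1-\ell}(x)$ from Lemma~2.10, and the falling-factorial trick absorbing the factor $(x-r)$ into $(Dx)_{Dr}$ via $(Dx)_{Dr}(x-r)=\tfrac1D(Dx)_{Dr+1}$. Your packaging is slightly cleaner---you first isolate the reflection $S_t(r-x)=(-1)^{Dr+t}S_t(x)$ and then deduce the pointwise identity $\Phi_{j,t}(1-x)=(-1)^{r(D+1)+t+j}\Phi_{j,t}(x)$, whereas the paper carries out the same computation term by term inside the integral; but the substance is identical, and both arguments arrive at the parity condition $t+(D+1)r+j\equiv 1\pmod 2$ (the ``$(D+1)t$'' in the displayed statement is evidently a typo for ``$(D+1)r$'').
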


\begin{proof}
We fix $1\leq t\leq r-1$ and $1\leq j\leq r$. According to \eqref{cucu2} and \eqref{cucu3}, we have that
\begin{equation}\label{p1}
\Phi_{j,t}(x)=A_{j,0}(x)(x-r)(x-\frac{r}{2})^{t-1}(Dx)_{Dr} + 
\cdots + A_{j,r-1}(x)(x-1)(x+\frac{r}{2}-1)^{t-1}(Dx+D(r-1))_{Dr}.
\end{equation}
On the other hand, from Lemma $2.9$ it follows that
\begin{equation}\label{p2}
A_{j,\ell}(1-x)=(-1)^{r+j}A_{j,r-1-\ell}(x),\;\text{ for all }0\leq \ell\leq r-1.
\end{equation}
Let $0\leq \ell\leq r-1$. Using the substitution $y=1-x$ and \eqref{p2} it follows that
$$ \int_0^1 A_{j,\ell}(x)(x-r+\ell)(x-\frac{r}{2}+\ell)^{t-1}(Dx+\ell D)_{Dr}\dx = $$
$$ = \int_1^0 A_{j,\ell}(1-y)(1-y-r-\ell)(1-y-\frac{r}{2}+\ell)^{t-1}(D-Dy+\ell D)_{Dr}(-1)\dy = $$
$$ =(-1)^{t+Dr} \int_0^1 A_{j,\ell}(1-y)(y+r-1-\ell)(y-\frac{r}{2}+r-\ell-1)^{t-1}(D(y+r-1-\ell)-1)_{Dr}\dy = $$
\begin{equation}\label{p3}
 =(-1)^{t+(D+1)r+j} \int_0^1 A_{j,r-1-\ell}(y)(y-1-\ell)(y-\frac{r}{2}+r-\ell-1)^{t-1}(D(y+r-1-\ell))_{Dr}\dy 
\end{equation}
From hypothesis, \eqref{p1} and \eqref{p3} it follows that $I_{j,t}=0$.
\end{proof}

\section{The case $D=1$}

Let $r\geq 1$ be an integer. Using the notations from the first section, by $(2.12)$, we have
\begin{equation}
 \Delta_{r,1} = (-1)^{r}\begin{vmatrix}
  B_1 & \frac{1}{2}B_2 & \cdots  & \frac{1}{r}B_r  \\ 
\frac{1}{2}B_2 & \frac{1}{3}B_3 & \cdots  & \frac{1}{r+1}B_{r+1}  \\ 
\vdots &  \vdots &  \vdots &  \vdots   \\
\frac{1}{r}B_r & \frac{1}{r+1}B_{r+1}  & \cdots & \frac{1}{2r-1}B_{2r-1}
\end{vmatrix}.
\end{equation}

The following lemma is an easy exercise of linear algebra.

\begin{lema} For any $k\geq 1$, we have that:
$$(1) \;\begin{vmatrix}
x_1 & x_2 & 0 & x_4 &  \cdots & 0 & x_{2k}  \\ 
x_2 & 0   & x_4 & 0 &  \cdots & x_{2k} & 0 \\ 
0  & x_4 & 0 & x_6 &  \cdots & 0 & x_{2k+2} \\
\vdots &  \vdots &  \vdots &  \vdots &  \vdots &  \vdots  &  \vdots   \\
x_{2k} & 0 & x_{2k+2} & \cdots & \cdots & 0 & x_{4k-2}
\end{vmatrix} = 
(-1)^k
\begin{vmatrix}
x_2 & x_4 & \cdots  & x_{2k}  \\ 
x_4 & x_6  & \cdots & x_{2k+2} \\ 
\vdots &  \vdots &  \vdots &  \vdots   \\
x_{2k} & x_{2k+2}  & \cdots & x_{4k-2}
\end{vmatrix}^2. $$
$$(2)\; \begin{vmatrix}
x_1 & x_2 & 0 & x_4 &  \cdots & x_{2k} & 0 \\ 
x_2 & 0   & x_4 & 0 &  \cdots & 0 & x_{2k+2} \\ 
0  & x_4 & 0 & x_6 &  \cdots & x_{2k+2} & 0 \\
\vdots &  \vdots &  \vdots &  \vdots &  \vdots &  \vdots  &  \vdots   \\
x_{2k+2} & 0 & x_{2k+4} & \cdots & \cdots & x_{4k} & 0
\end{vmatrix} = 
(-1)^kx_1
\begin{vmatrix}
x_4 & x_6 & \cdots  & x_{2k+2}  \\ 
x_6 & x_8  & \cdots & x_{2k+4} \\ 
\vdots &  \vdots &  \vdots &  \vdots   \\
x_{2k+2} & x_{2k+4}  & \cdots & x_{4k}
\end{vmatrix}^2. \Box$$
\end{lema}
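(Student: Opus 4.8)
The plan is to prove both identities at once, exploiting the ``checkerboard'' shape of the matrix. In either case the matrix --- call it $M$ --- is Hankel-like: its $(i,j)$ entry equals $x_{i+j-1}$ under the convention that $x_n=0$ for odd $n\ge 3$, so that its only nonzero entries are the corner $M_{11}=x_1$ and the entries $x_{i+j-1}$ with $i+j$ odd; the size of $M$ is $2k$ in part (1) and $2k+1$ in part (2). First I would reindex $M$ by the permutation that lists the odd indices $1,3,5,\dots$ before the even indices $2,4,6,\dots$, applying the \emph{same} permutation to rows and to columns; since one permutation acts on both, $\det M$ is unchanged. Because $x_{i+j-1}=0$ whenever $i$ and $j$ have the same parity and $(i,j)\neq(1,1)$, the reindexed matrix becomes
$$M'=\begin{pmatrix} x_1E_{11} & B \\ B^{\mathsf{T}} & 0\end{pmatrix},$$
where $E_{11}$ has a single $1$ in position $(1,1)$, the lower-right (even $\times$ even) block vanishes, and $B=\bigl(x_{2(a+d-1)}\bigr)$ is again a Hankel matrix, of size $k\times k$ in part (1) and $(k+1)\times k$ in part (2).

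Next, by multilinearity of the determinant in its first row, writing $x_1=x_1+0$ in the $(1,1)$ entry, I get
$$\det M'=\det N+x_1\det N^{(1,1)},\qquad N:=\begin{pmatrix} 0 & B \\ B^{\mathsf{T}} & 0\end{pmatrix},$$
where $N^{(1,1)}$ is $N$ with its first row and first column deleted. That deletion simply removes the first row of $B$ (and correspondingly the first column of $B^{\mathsf{T}}$), so $N^{(1,1)}$ has the same anti-block-diagonal shape with $B$ replaced by $B'$, the matrix $B$ with its first row removed; hence $B'$ is $(k-1)\times k$ in part (1) and $k\times k$ in part (2).

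It remains to evaluate the two summands. A square matrix that contains a $p\times q$ zero submatrix with $p+q$ greater than its size has zero determinant (equivalently, too many of its rows lie in a coordinate subspace of too small a dimension). In part (2) this kills $\det N$: the $k+1$ rows $(0\mid B)$ all lie in a $k$-dimensional coordinate subspace. In part (1) it kills $\det N^{(1,1)}$: that matrix has size $2k-1$ yet contains a $k\times k$ block of zeros. In each of the two remaining situations the relevant matrix is genuinely anti-block-diagonal with a square block $C$, and $\det\begin{pmatrix}0 & C \\ C^{\mathsf{T}} & 0\end{pmatrix}=(-1)^{m}(\det C)^2$ when $C$ is $m\times m$ --- move the last $m$ columns ahead of the first $m$, a permutation of sign $(-1)^{m^2}=(-1)^m$, to reach $\begin{pmatrix}C & 0 \\ 0 & C^{\mathsf{T}}\end{pmatrix}$. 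With $m=k$ this yields $\det M=\det N=(-1)^k(\det B)^2$ in part (1) and $\det M=x_1\det N^{(1,1)}=(-1)^k x_1(\det B')^2$ in part (2). Finally one checks that $B=\bigl(x_{2(a+d-1)}\bigr)_{1\le a,d\le k}$ is exactly the $k\times k$ determinant appearing on the right of (1), and $B'=\bigl(x_{2(a+d)}\bigr)_{1\le a,d\le k}$ the one appearing on the right of (2).

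The only point requiring care is the sign bookkeeping: the symmetric row/column reindexing contributes the trivial factor $(\pm1)^2=1$, the anti-block-diagonal to block-diagonal step contributes $(-1)^{m^2}=(-1)^m$, and one must correctly match the parity of the size of $M$ with which of $\det N,\ \det N^{(1,1)}$ survives --- this is exactly what distinguishes parts (1) and (2), and it is also why the exceptional corner $x_1$ is invisible in (1) (it sits in the block that never contributes) yet shows up as a factor in (2).
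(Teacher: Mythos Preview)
Your argument is correct. The paper itself gives no proof of this lemma---it is stated as ``an easy exercise of linear algebra'' and closed with a $\Box$---so there is nothing to compare against; your checkerboard reindexing into the block form $\begin{pmatrix} x_1E_{11} & B\\ B^{\mathsf T} & 0\end{pmatrix}$, followed by the multilinear split and the anti-block-diagonal evaluation, is exactly the kind of clean verification the paper is leaving to the reader.
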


For any integer $n\geq 1$, let $S_n$ be the \emph{symmetric group} of order $n$. Given $\sigma\in S_n$ a permutation, the \emph{signature} of $\sigma$ is denoted by
$$\varepsilon(\sigma):=\prod_{1\leq i < j\leq n}\frac{\sigma(j)-\sigma(i)}{j-i} \in \{\pm 1\}.$$

\begin{lema}
For any $k\geq 1$, we have that:
$$(1) \; \Delta_{2k,1} = \frac{(-1)^k}{2^k} \left( \sum_{\sigma\in S_k} \epsilon(\sigma) \frac{B_{2\sigma(1)} B_{2\sigma(2)+2} 
\cdots B_{2\sigma(k)+2k-2}}{\sigma(1)(\sigma(2)+1)\cdots(\sigma(k)+k-1)} \right)^2.$$
$$(2) \; \Delta_{2k+1,1} = \frac{(-1)^{k+1}}{2^{k+1}} \left( \sum_{\sigma\in S_k} \epsilon(\sigma) \frac{B_{2\sigma(1)+2} B_{2\sigma(2)+4} 
\cdots B_{2\sigma(k)+2k} }{(\sigma(1)+1)(\sigma(2)+2)\cdots (\sigma(k)+k)} \right)^2.$$
\end{lema}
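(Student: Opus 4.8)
The goal is to reduce each $\Delta_{r,1}$ to a perfect square (up to an explicit sign and power of $2$) of a $k\times k$ determinant built from Bernoulli numbers, so that in the next step one only has to check the relevant sub-determinant is nonzero. The starting point is the Hankel-type matrix $M=\left(\tfrac{1}{i+j-1}B_{i+j-1}\right)_{1\le i,j\le r}$ appearing in $(3.1)$, whose $(i,j)$ entry vanishes whenever $i+j-1$ is odd and $>1$, since $B_n=0$ for odd $n>1$. So the plan is: first record the exact checkerboard pattern of zeros in $M$ (the only odd index giving a nonzero Bernoulli number is $1$, contributing $B_1=-\tfrac12$ in the top-left corner when $r$ is... actually $i+j-1=1$ only at $(i,j)=(1,1)$), and then apply Lemma~$3.1$, which is precisely the linear-algebra identity that collapses such a sparse symmetric matrix into the square of a smaller dense determinant.

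Concretely, I would split into the two parities. For $r=2k$, the matrix $M$ has the zero pattern of Lemma~$3.1(1)$ with $x_n=\tfrac{1}{n}B_n$: the entries with $i+j-1$ odd, $i+j-1\ge 3$, are zero, and $x_1=B_1$ sits in position $(1,1)$. Applying Lemma~$3.1(1)$ gives
$$
M=(-1)^k\begin{vmatrix} x_2 & x_4 & \cdots & x_{2k}\\ x_4 & x_6 & \cdots & x_{2k+2}\\ \vdots & & & \vdots\\ x_{2k} & x_{2k+2}& \cdots & x_{4k-2}\end{vmatrix}^{2}
= (-1)^k\left(\det\Bigl(\tfrac{B_{2(i+j)-2}}{2(i+j)-1}\Bigr)_{1\le i,j\le k}\right)^{2},
$$
and this inner determinant expanded via the Leibniz formula $\det = \sum_{\sigma\in S_k}\varepsilon(\sigma)\prod_i x_{2(i+\sigma(i))-2}$ is exactly the sum written in the statement (with $x_{2m}=\tfrac{B_{2m}}{2m}$, so the summand is $\prod_i \tfrac{B_{2\sigma(i)+2i-2}}{2\sigma(i)+2i-1}$; after pulling out the overall $2^{-k}$ from the denominators $2m=2(\ldots)$... careful bookkeeping of the factor $2^k$ from the $k$ even indices). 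Combining with the $(-1)^r=(-1)^{2k}=1$ prefactor in $(3.1)$ and tracking the $2^{-k}$, one lands on part~$(1)$. For $r=2k+1$, the matrix has the pattern of Lemma~$3.1(2)$ (now $x_1=B_1$ in the corner, and the dense inner block is indexed starting from $x_4$); applying $(2)$ and using $B_1=-\tfrac12$, together with the prefactor $(-1)^{r}=(-1)^{2k+1}=-1$, yields part~$(2)$, the $(-1)^{k+1}2^{-(k+1)}$ coming from $(-1)\cdot(-1)^k\cdot x_1 = (-1)^{k+1}(-\tfrac12)$ folded with the $2^{-k}$ from the even indices.

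The only genuine content is matching the zero patterns to Lemma~$3.1$ and then unwinding the Leibniz expansion; there is no deep obstacle. The one place to be careful — and the step I expect to consume the most attention — is the exact bookkeeping of the power of $2$ and the sign: each even Bernoulli index $2m$ in a denominator contributes a factor $\tfrac12$, there are $k$ of them in the $k\times k$ determinant, and in the odd case the extra $x_1=-\tfrac12$ and the extra $(-1)$ from $(-1)^{2k+1}$ must be reconciled with the stated $(-1)^{k+1}/2^{k+1}$. I would do this by writing $x_{2m}=\tfrac{1}{2}\cdot\tfrac{B_{2m}}{m}$... more simply, by factoring $\tfrac{B_{2m}}{2m}=\tfrac{1}{2}\cdot\tfrac{B_{2m}}{m}$ is not what the statement uses — the statement keeps denominators of the form $\sigma(i)+i-1$, i.e. it has pulled exactly one factor of $2$ out of each of the $k$ entries, giving the global $2^{-k}$ (resp. $2^{-(k+1)}$ after absorbing $x_1$). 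Verifying that this is consistent for small $k$ (say $k=1,2$) against $(3.1)$ directly is a cheap sanity check I would include, and then the general identity follows immediately from Lemma~$3.1$ and the multilinearity of the determinant.
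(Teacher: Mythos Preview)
Your approach is exactly the paper's: the paper's proof is the single line ``Since $B_{2p+1}=0$ for all $p\geq 1$, the conclusion follows from $(3.1)$, Lemma~$3.1$ and the definition of a determinant,'' which is precisely the plan you spell out---match the zero pattern of the Hankel matrix $\left(\tfrac{B_{i+j-1}}{i+j-1}\right)$ to Lemma~$3.1$(1) or (2) according to the parity of $r$, then Leibniz-expand the resulting $k\times k$ determinant and pull out the factor $\tfrac{1}{2}$ from each of the $k$ even denominators. Your caution about the sign/power-of-$2$ bookkeeping (and the suggestion to check $k=1,2$ directly) is well placed, since that is the only place the argument can slip; otherwise there is nothing to add.
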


\begin{proof}
Since $B_{2p+1}=0$ for all $p\geq 1$, the conclusion follows from $(3.1)$, Lemma $3.1$ and the definition of a determinant.
\end{proof}

Let $q=\frac{m}{n}\in \mathbb Q$ with $m,n\in\mathbb Z, n\neq 0$. The $2$-valuation of $q$ is
\begin{equation}\label{hop}
v_2(q):=v_2(m)-v_2(n),\;\text{ where } v_2(n):=\max\{k\in\mathbb N\;:\;2^k|n\}, \; v_2(0)=-\infty.
\end{equation}
According to the von Staudt - Clausen Theorem (see \cite{clausen},\cite{staudt}), we have that
\begin{equation}\label{hopa}
\text{Denominator of } B_{2n}=\prod_{p-1|2n}p, \text{ where }p>0 \text{ are primes}.
\end{equation}
From \eqref{hop} and \eqref{hopa} it follows that 
\begin{equation}\label{hopaa}
v_2(B_{2n})=-1, \text{ for all }n\geq 1. 
\end{equation}
For any integer $k\geq 1$, we consider the maps $\Phi_k,\Psi_k:S_k\rightarrow \mathbb N$, 
$$\Phi_k(\sigma):=\sum_{j=1}^k v_2(j+\sigma(j)-1),\; \Psi_k(\sigma):=\sum_{j=1}^k v_2(j+\sigma(j)),\;\text{ for all }\sigma\in S_k.$$ 
With these notations, we have the following result.

\begin{lema}
(1) $\Phi_k$ has an unique maximal element $\sigma_k\in S_k$, defined recursively as
$$ \sigma_k(j):= \begin{cases} \sigma_{m-k}(j),\; j\leq m-k \\ m+1-j,\; j\geq m-k+1 \end{cases}, \; \sigma_1(1)=1,$$
where $t=\left\lfloor \log_2(k-1) \right\rfloor + 1$ and $m=2^t$.

(2) $\Psi_k$ has an unique maximal element $\tau_k\in S_k$, defined recursively as
$$ \tau_k(j):= \begin{cases} \tau_{m-k}(j),\; j\leq m-k \\ m+1-j,\; j\geq m-k+1 \end{cases}, \; \tau_1(1)=1,$$
where $t=\left\lfloor \log_2(k) \right\rfloor + 1$ and $m=2^t-1$.
\end{lema}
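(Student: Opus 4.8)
The plan is to treat both parts by the same argument; I describe part~(1) and then indicate the minor changes for part~(2). Write $m=2^t$. The choice $t=\lfloor\log_2(k-1)\rfloor+1$ amounts precisely to $m/2<k\le m$, i.e.\ $m$ is the least power of $2$ which is $\ge k$; the elementary fact behind everything is that $m$ is the \emph{unique} multiple of $2^t$ in $\{1,\dots,2k-1\}$ (the next one, $2^{t+1}$, exceeds $2k-1$ since $2k\le 2m$), while $j+\sigma(j)-1$ always lies in $\{1,\dots,2k-1\}$. First I would split the objective over dyadic thresholds: for $T\ge 1$ set $N_T(\sigma):=\#\{\,1\le j\le k:\ 2^T\mid j+\sigma(j)-1\,\}$, so that $\Phi_k(\sigma)=\sum_{T\ge 1}N_T(\sigma)$ for every $\sigma\in S_k$. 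By the fact above, $N_T(\sigma)=0$ for $T>t$, and $N_t(\sigma)\le 2k-m$ for every $\sigma$, with equality $N_t(\sigma)=2k-m$ exactly when $\sigma(j)=m+1-j$ for all $j$ in the ``top block'' $\{m-k+1,\dots,k\}$ --- which is exactly the set of index pairs realised by $\sigma_k$.

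For $T\le t$ I would establish, by a Hall-type deficiency argument (inside each residue class modulo $2^T$ match indices to values as far as possible, then complete the bijection arbitrarily), that $\max_{\sigma\in S_k}N_T(\sigma)=\sum_c\min(\ell_c,r_c)$, the sum being over residues $c$ modulo $2^T$, where $\ell_c$ counts the $j\in\{1,\dots,k\}$ with $j\equiv 1-c\pmod{2^T}$ and $r_c$ counts the $v\in\{1,\dots,k\}$ with $v\equiv c\pmod{2^T}$. Then, by strong induction on $k$ (base case $k=1$ trivial, the case $k=2^t$ being the one where the recursion stops), I would prove that $\sigma_k$ attains \emph{all} of these maxima simultaneously. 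Indeed, the top block of $\sigma_k$ maps $\{m-k+1,\dots,k\}$ bijectively onto itself with $j+\sigma_k(j)-1=m$, so since $2^T\mid m$ for $T\le t$ it contributes its whole size $2k-m$ to $N_T(\sigma_k)$, while the residual block of $\sigma_k$ is by definition $\sigma_{m-k}$ acting on $\{1,\dots,m-k\}$. On the counting side, both the index set and the value set $\{1,\dots,k\}$ split as the residual part $\{1,\dots,m-k\}$ together with an ``extra'' part, and the two extra parts are translates of one another by $m\equiv 0\pmod{2^T}$; hence $\ell_c=\ell_c'+e_c$ and $r_c=r_c'+e_c$ for a common $e_c$, so $\min(\ell_c,r_c)=e_c+\min(\ell_c',r_c')$ and $\max_{S_k}N_T=(2k-m)+\max_{S_{m-k}}N_T$. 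By the induction hypothesis $\sigma_{m-k}$ realises the right-hand maximum, whence $\sigma_k$ realises $\max_{S_k}N_T$ for each $T$.

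Summing over $T$ now gives $\Phi_k(\sigma)\le\Phi_k(\sigma_k)$ for all $\sigma\in S_k$, so $\sigma_k$ is a maximum. For uniqueness, $\Phi_k(\sigma)=\Phi_k(\sigma_k)$ forces $N_T(\sigma)=N_T(\sigma_k)$ for every $T$, in particular $N_t(\sigma)=2k-m$, which (as above) makes $\sigma$ contain all the pairs $(j,m+1-j)$ with $j\in\{m-k+1,\dots,k\}$; hence $\sigma$ restricts to a permutation $\sigma'$ of $\{1,\dots,m-k\}$, and the block identity $\Phi_k(\sigma)=(2k-m)t+\Phi_{m-k}(\sigma')$ (valid also for $\sigma_k$) yields $\Phi_{m-k}(\sigma')=\Phi_{m-k}(\sigma_{m-k})$, so $\sigma'=\sigma_{m-k}$ by induction and $\sigma=\sigma_k$. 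Part~(2) goes through verbatim with $j+\sigma(j)$ in place of $j+\sigma(j)-1$: the relevant interval becomes $\{2,\dots,2k\}$, whose largest power of $2$ is $2^t$ --- which is exactly why one takes $m=2^t-1$, so the target sum $m+1$ equals $2^t$ --- the top block consists of the pairs with $j+\tau_k(j)=2^t$, and $\tau_k$ plays the role of $\sigma_k$.

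I expect the simultaneous-maximality step to be the main obstacle: a priori the maximisers of $N_T$ for different $T$ need not be mutually compatible, and what reconciles them here is precisely that the top block of $\sigma_k$ (resp.\ $\tau_k$) consists of index pairs whose common sum is a power of $2$ divisible by every $2^T$ with $T\le t$, together with the fact that this block is a self-paired translate --- by that power of $2$ --- of the residual block. Keeping the residue bookkeeping $\ell_c=\ell_c'+e_c$, $r_c=r_c'+e_c$ exactly right, and handling the degenerate case in which $k$ is a power of $2$ (so the residual block is empty), are the points that need some care but are routine.
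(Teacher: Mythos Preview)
Your argument is correct and genuinely different from the paper's. The paper first reduces to involutions (breaking any cycle of length $\ge 3$ into transpositions without decreasing $\Phi_k$) and then runs a local exchange: if $u+v-1=2^t$ but the maximising involution contains $(uv')$ and $(u'v)$ instead of $(uv)$, a $2$-adic inequality shows that replacing these by $(uv)(u'v')$ strictly increases $\Phi_k$; this pins down the top block and the rest follows by induction on $k$. You instead decompose $\Phi_k=\sum_{T\ge 1}N_T$ into dyadic levels, bound each $N_T$ by the bipartite residue count $\sum_c\min(\ell_c,r_c)$, and then prove by induction that $\sigma_k$ saturates every one of these bounds simultaneously, the key point being that the top block contributes its full size at each level $T\le t$ while the residual block reduces to $\sigma_{m-k}$. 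Your route avoids the cycle--to--involution reduction and the somewhat delicate case analysis of the swap argument; in return it needs the (easy but not quite one-line) observation that the extra parts of the index and value sets are paired by $j\mapsto m+1-j$, giving $\ell_c=\ell_c'+e_c$, $r_c=r_c'+e_c$. Uniqueness is arguably cleaner in your version, since equality in $\sum_T N_T$ immediately forces equality at the top level $T=t$, which rigidly determines the top block. Both proofs use the same induction on $k$ once the top block is fixed, and Part~(2) is handled by the same translation in either approach.
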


\begin{proof}
(1) Let $\sigma\in S_k$. We write $\sigma=C_1\cdots C_m$ as product of disjoint cycles. One can easily see that 
$$\Phi_k(\sigma)=\Phi_k(C_1)+\cdots+\Phi_k(C_m).$$
Let $C=(i_1i_2\ldots i_r)$ be a cycle of length $r\geq 3$. We have 
$$\Phi_k(C)=v_2(i_1+i_2-1)+\cdots+v_2(i_{r-1}+i_r-1) + v_2(i_r + i_1-1).$$

a) If $r$ is odd, then, without any loss of generality, we may assume that $i_1$ and $i_r$ have the same parity and hence
$v_2(i_r + i_1-1)=0$. If 
$$ v_2(i_1+i_2-1)+v_2(i_3+i_4-1)+\cdots + v_2(i_{r-2}+i_{r-1}-1)\leq v_2(i_2+i_3-1)+v_2(i_4+i_5-1)+\cdots + v_2(i_{r-1}+i_{r}-1),$$
then $\Phi_k((i_1i_2)(i_3i_4)\cdots (i_{r-2}i_{r-1}))\geq \Phi(C)$. Else, $\Phi_k((i_2i_3)(i_4i_5)\cdots (i_{r-1}i_{r}))\geq \Phi(C)$.

b) If $r$ is even, then, if
$$ v_2(i_1+i_2-1)+v_2(i_3+i_4-1)+\cdots + v_2(i_{r-1}+i_{r}-1)\leq v_2(i_2+i_3-1)+v_2(i_4+i_5-1)+\cdots + v_2(i_{r}+i_{1}-1),$$
then $\Phi_k((i_1i_2)(i_3i_4)\cdots (i_{r-1}i_{r}))\geq \Phi(C)$. Else, $\Phi_k((i_2i_3)(i_4i_5)\cdots (i_{r}i_{1}))\geq \Phi(C)$.

Let $\sigma\in S_k$ such that $\sigma$ is maximal for $\Phi_k$. From the previous considerations, we may assume that $\sigma$
is an involution, i.e. $\sigma=\tau_1\tau_2\cdots\tau_p$, where $\tau_j$'s are disjoint transpositions and 
$p=\left\lfloor \frac{k}{2}\right\rfloor$.

Let $u,v\in \{1,\ldots,k\}$ such that $u+v-1=2^t$. We claim that there exists an index $j$ such that $\tau_j=(uv)$.
Assume this is not the case. Without any loss of generality, we may assume $\tau_1 = (uv')$ and $\tau_2=(u'v)$ for some
other indexes $u',v'$. Let $t'=v_2(u'+v'-1)$.

a) It $t'<t$, then $u'+v'-1=2^t$, since $u'+v'-1<2^{t+1}$. It follows that $u+v'-1 \neq 2^t$, hence $v_2(u+v'-1)<t$. Thus
$v_2(u'+v-1)+v_2(u+v'-1)<t+t'$. 

b) If $t'<t$, then $u'+v'-1=2^{t'}\alpha$, where $\alpha$ is odd. Therefore 
$$ (u'+v-1)+(u+v'-1) = (u+v-1)+(u'+v'-1) = 2^t + 2^{t'}\alpha = 2^{t'}(2^{t-t'}+\alpha). $$
This implies $\min\{v_2(u'+v-1), v_2(u+v'-1) \}\leq t'$. Since $v_2(u'+v-1), v_2(u+v'-1)<t$, we get
$v_2(u'+v-1)+v_2(u+v'-1)<t+t'$.

In both cases, we get $\Phi_k((u'v')(uv)\tau_3\cdots \tau_p)>\Phi_k(\sigma)$, a contradiction, and thus we proved the claim.
The claim implies $\sigma(j)=\sigma_k(j)$, for any $j\geq m-k+1$. If $k=m$, we are done. If $k<m$, since $\sigma$ is a 
maximal element for $\Phi_k$, it follows that $\sigma|_{\{1,\ldots,m-k\}}$ is a maximal element for $S_{m-k}$. Using induction on $k\geq 1$,
it follows that $\sigma(j)=\sigma_{m-k}(j)$ for any $1\leq j\leq m-k$, and thus $\sigma=\sigma_k$.

$(2)$ The proof is similar to the proof of $(1)$.
\end{proof}

The main result of this section is the following theorem.

\begin{teor}
For any $r\geq 1$, $\Delta_{1,r}\neq 0$.
\end{teor}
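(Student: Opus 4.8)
The plan is to use Lemma 3.2 to reduce the nonvanishing of $\Delta_{r,1}$ (the determinant displayed in (3.1)) to the nonvanishing of an explicit signed sum over $S_k$, and then to detect this nonvanishing $2$-adically. The case $r=1$ is immediate from (3.1), since there $\Delta_{1,1}=-B_1=\tfrac12\ne0$; so assume $r\ge 2$ and distinguish the parity of $r$.

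Suppose $r=2k$ with $k\ge 1$. By Lemma 3.2(1), $\Delta_{2k,1}=\tfrac{(-1)^k}{2^k}T_k^2$ with $T_k:=\sum_{\sigma\in S_k}\varepsilon(\sigma)\prod_{j=1}^{k}\frac{B_{2(j+\sigma(j)-1)}}{j+\sigma(j)-1}$, so it is enough to show $T_k\ne0$. The first step is to compute the $2$-adic valuation of the summand indexed by $\sigma$. Since $j+\sigma(j)-1\ge1$ for every $j$, equation (3.4) gives $v_2\bigl(B_{2(j+\sigma(j)-1)}\bigr)=-1$, and $\varepsilon(\sigma)=\pm1$ is a $2$-adic unit; hence that summand has valuation $\sum_{j=1}^{k}\bigl(-1-v_2(j+\sigma(j)-1)\bigr)=-k-\Phi_k(\sigma)$, with $\Phi_k$ exactly the map of Lemma 3.3. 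By Lemma 3.3(1), $\Phi_k$ attains its maximum at a unique $\sigma_k\in S_k$; therefore among the $k!$ summands of $T_k$ there is a single one of least $2$-adic valuation $-k-\Phi_k(\sigma_k)$, all others being strictly larger. The strong triangle inequality for $v_2$ then yields $v_2(T_k)=-k-\Phi_k(\sigma_k)<\infty$, so $T_k\ne0$ and $\Delta_{2k,1}\ne0$. For $r=2k+1$ with $k\ge1$ the reasoning is identical, now with Lemma 3.2(2): one has $\Delta_{2k+1,1}=\tfrac{(-1)^{k+1}}{2^{k+1}}U_k^2$ where $U_k:=\sum_{\sigma\in S_k}\varepsilon(\sigma)\prod_{j=1}^{k}\frac{B_{2(j+\sigma(j))}}{j+\sigma(j)}$, each summand has valuation $-k-\Psi_k(\sigma)$ (using (3.4) again, legitimate since $j+\sigma(j)\ge2$), and Lemma 3.3(2) provides the unique maximizer $\tau_k$ of $\Psi_k$; so $v_2(U_k)=-k-\Psi_k(\tau_k)$ is finite, $U_k\ne0$, and $\Delta_{2k+1,1}\ne0$.

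I do not anticipate a real obstacle beyond what is already done: the whole point is concentrated in Lemma 3.3, which pins down the $2$-adically dominant permutation and, decisively, asserts its \emph{uniqueness}; granted that, the passage from a unique dominant term to a nonzero sum is merely the ultrametric property of $v_2$. The only things to watch are the verification that every Bernoulli index occurring is a positive even integer (so that (3.4) is applicable) and the separation of the degenerate case $r=1$, which Lemma 3.2 does not cover and which (3.1) disposes of at once.
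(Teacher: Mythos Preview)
Your proof is correct and follows essentially the same approach as the paper: reduce via Lemma~3.2 to a signed sum over $S_k$, compute the $2$-adic valuation of each summand using \eqref{hopaa}, and invoke the uniqueness of the maximizer in Lemma~3.3 together with the ultrametric inequality to conclude that the sum is nonzero. Your write-up is in fact a bit more careful than the paper's, since you separate out the case $r=1$ explicitly and verify that all Bernoulli indices occurring are positive even integers so that \eqref{hopaa} applies.
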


\begin{proof}
It is enough to show that $v_2(\Delta_{r,1})>-\infty$. Assume $r=2k$. Let $\sigma_k\in S_k$ be the permutation given in Lemma $3.3(1)$. 
For any $\sigma_k\neq \sigma\in S_k$, from $(3.4)$ and Lemma $3.3(1)$ it follows that
$$-\infty<v_2\left(\frac{B_{2\sigma_k(1)} B_{2\sigma_k(2)+2} \cdots B_{2\sigma_k(k)+2k-2}}{\sigma_k(1)(\sigma_k(2)+1)\cdots(\sigma_k(k)+k-1)}\right) < 
v_2 \left(\frac{B_{2\sigma(1)} B_{2\sigma(2)+2} \cdots B_{2\sigma(k)+2k-2}}{\sigma(1)(\sigma(2)+1)\cdots(\sigma(k)+k-1)}\right),$$
hence, by Lemma $3.2(1)$, $\Delta_{2k,1}\neq 0$. The proof of the case $r=2k+1$ is similar, using Lemma $3.2(2)$ and Lemma $3.3(2)$.
\end{proof}

\begin{obs}(The case $D=2$) \emph{
Let $r\geq 1$ be an integer. Using the notations from the first section, by $(2.12)$, we have
\begin{equation}\label{crook}
 \Delta_{r,2} = A\cdot \begin{vmatrix}
  B_1(\frac{1}{2}) & B_1 & \frac{1}{2}B_2(\frac{1}{2}) & \frac{1}{2}B_2 & \cdots  & \frac{1}{r}B_r(\frac{1}{2}) & \frac{1}{r}B_r \\ 
\frac{1}{2}B_2(\frac{1}{2}) & \frac{1}{2}B_2 & \frac{1}{3}B_3(\frac{1}{2}) & \frac{1}{3}B_3 &  \cdots & \frac{1}{r+1}B_{r+1}(\frac{1}{2}) & \frac{1}{r+1}B_{r+1}  \\ 
\vdots &  \vdots &  \vdots &  \vdots  & \vdots &  \vdots &  \vdots \\
\frac{1}{2r}B_{2r}(\frac{1}{2}) & \frac{1}{2r}B_{2r} & \frac{1}{2r+1}B_{2r+1}(\frac{1}{2}) & \frac{1}{2r+1}B_{2r+1} &  \cdots & \frac{1}{3r-1}B_{3r-1}(\frac{1}{2}) & \frac{1}{3r-1}B_{3r-1}  \\ 
\end{vmatrix},
\end{equation}
where $A\neq 0$. Using the identity $B_n(\frac{1}{2})=(\frac{1}{2^{n-1}}-1)B_n$ in \eqref{crook}, it follows that \small
\begin{equation}\label{crook2}
 \Delta_{r,2} = A\cdot \begin{vmatrix}
  B_1 & B_1 & \frac{1}{2}B_2 & \frac{1}{2}B_2 & \cdots  & \frac{1}{r}B_r & \frac{1}{r}B_r \\ 
\frac{1}{2\cdot 2}B_2 & \frac{1}{2}B_2 & \frac{1}{2\cdot 3}B_3 & \frac{1}{3}B_3 &  \cdots & \frac{1}{2\cdot(r+1)}B_{r+1} & \frac{1}{r+1}B_{r+1}  \\ 
\vdots &  \vdots &  \vdots &  \vdots  & \vdots &  \vdots &  \vdots \\
\frac{1}{2^{2r-1}\cdot 2r}B_{2r} & \frac{1}{2r}B_{2r} & \frac{1}{2^{2r-1}\cdot(2r+1)}B_{2r+1} & \frac{1}{2r+1}B_{2r+1} &  \cdots & \frac{1}{2^{2r-1}\cdot(3r-1)}B_{3r-1} & \frac{1}{3r-1}B_{3r-1}  \\ 
\end{vmatrix}.
\end{equation}}\normalsize

\emph{
Since $v_2(B_{2n})=-1$ for all $n\geq 1$, $B_1=\frac{1}{2}$ and $B_{2n+1}=0$ for all $n\geq 1$,
from \eqref{crook2} it follows that in order to prove that $\Delta_{2,r}\neq 0$ it is enough to show that the following determinant is nonzero:
\begin{equation}
 \bar \Delta_r:= \begin{vmatrix}
                 1 & \frac{1}{2} & 0 & \frac{1}{4} & \cdots & 1 & \frac{1}{2} & 0 & \frac{1}{4} & \cdots  \\
                 \frac{1}{2} & 0 & \frac{1}{4} & 0 & \cdots & \frac{1}{2\cdot 2} & 0 & \frac{1}{2\cdot 4} & 0 & \cdots  \\
                  0 & \frac{1}{4} & 0 & \frac{1}{6} & \cdots & 0 & \frac{1}{2^2\cdot 4} & 0 & \frac{1}{2^2\cdot 6} & \cdots  \\
                 \vdots & \vdots &\vdots &\vdots &\vdots &\vdots &\vdots &\vdots &\vdots &\vdots \\
                  \frac{1}{2r} & 0 & \frac{1}{2r+2} & 0 & \cdots & \frac{1}{2^{2r-1}\cdot 2r} & 0 & \frac{1}{2^{2r-1}\cdot(2r+2)} & 0 & \cdots
                \end{vmatrix}
\end{equation}
In order to prove that, it is enough to show that $v_2(\bar \Delta_{r})>-\infty$. We denote by $a_{ij}$ the entry on the row $i$ and the column $j$ in $\bar \Delta_r$, i.e. $a_{11}=1$, $a_{12}=\frac{1}{2}$ etc.
For any permutation $\sigma:\{1,2,\ldots,2r\}\to\{1,2,\ldots,2r\}$, we denote $$a_{\sigma}:=\prod_{i=1}^{2r}a_{i\sigma(i)}.$$
Our aim is to construct a permutation
$\bar \sigma:\{1,2,\ldots,2r\}\to\{1,2,\ldots,2r\}$ such that 
$$v_2(\bar \Delta_{r}) = v_2(a_{\bar \sigma})\text{ and }
v_2(\bar \Delta_{r})< v_2(a_{\bar \tau}),$$ for any permutation $\bar \tau:\{1,2,\ldots,2r\}\to\{1,2,\ldots,2r\}$ with $\bar\tau \neq \bar\sigma$ and $a_{\bar \tau}\neq 0$.
Moreover, we want $\bar\sigma$ to satisfy the condition $\bar\sigma({\{1,2\ldots,r\}}) = \{1,2,\ldots,r\}$. 
As in the proof of Lemma $3.3$, we can construct a permutation
$\delta:\{1,2,\ldots,r\}\to \{1,2,\ldots,r\}$ such that $$v_2(\prod_{j=1}^r a_{j\delta(j)}) < v_2(\prod_{j=1}^r a_{j\varepsilon(j)}),$$ for any permutation 
$\varepsilon:\{1,2,\ldots,r\}\to \{1,2,\ldots,r\}$ such that $\varepsilon\neq \delta$ and $\prod_{j=1}^r a_{j\varepsilon(j)}\neq 0$.
We let $\bar\sigma(j)=\delta(j)$ for $1\leq j\leq r$.}

\emph{
Similarly, we can construct a permutation $\delta':\{r+1,\ldots,2r\}\to \{r+1,\ldots,2r\}$ such that 
$$v_2(\prod_{j=r+1}^{2r} a_{j\delta'(j)}) < v_2(\prod_{j=r+1}^{2r} a_{j\varepsilon'(j)}),$$ for any permutation $\varepsilon'$ on
$\{r+1,r+2,\ldots,2r\}$ such that $\varepsilon'\neq \delta'$ and $\prod_{j=r+1}^{2r} a_{j\varepsilon(j)}\neq 0$. We let $\bar\sigma(j)=\delta'(j)$ for $r+1\leq j\leq 2r$.
Now, let $\bar \tau \neq \bar \sigma$ be a permutation on $\{1,2,\ldots,2r\}$ such that $a_{\bar\tau}\neq 0$. 
If $\bar\tau(\{1,2,\ldots,r\})=\{1,2,\ldots,r\}$ then $v_2(a_{\bar\sigma})<v_2(a_{\bar\tau})$ by the
choosing of $\delta$ and $\delta'$. Unfortunately, we are not able to tackle the case $\bar\tau(\{1,2,\ldots,r\})\neq \{1,2,\ldots,r\}$.
However, we believe that our method works, as it does for small values of $r$.}
\end{obs}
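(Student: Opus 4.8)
The plan is to complete the argument sketched in the Remark. By $(2.12)$, the identity $B_n(\tfrac12)=(2^{1-n}-1)B_n$, the consequence $v_2(B_{2n})=-1$ of the von Staudt--Clausen theorem, and the vanishing $B_{2n+1}=0$ for $n\geq 1$, proving $\Delta_{r,2}\neq 0$ reduces to proving $\bar\Delta_r\neq 0$, and for the latter it suffices to establish $v_2(\bar\Delta_r)>-\infty$. Writing $\bar\Delta_r=\sum_{\sigma\in S_{2r}}\varepsilon(\sigma)a_\sigma$ with $a_\sigma=\prod_{i=1}^{2r}a_{i\sigma(i)}$, a strictly unique minimizer of $v_2(a_\sigma)$ among the permutations with $a_\sigma\neq 0$ cannot be cancelled in the alternating sum, so the task is to produce such a minimizer. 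The Remark already constructs a block-preserving candidate $\bar\sigma$ and, via the extremal permutations $\delta,\delta'$ of the Lemma $3.3$ type, settles the competition among all $\bar\tau$ with $\bar\tau(\{1,\dots,r\})=\{1,\dots,r\}$; the missing ingredient is exactly the control of the block-mixing permutations.

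Rather than compare permutations directly, I would first transform the determinant by a single column operation. Pairing each column $u_k$ carrying the value $B_{\bullet}(\tfrac12)$ with the adjacent column $w_k$ carrying $B_{\bullet}(0)=B_{\bullet}$, the identity above gives, entrywise in row $i$, the relation $(u_k)_i+(w_k)_i=2^{\,2-i-k}(w_k)_i$. Adding $w_k$ to $u_k$, pulling out the column factor $2^{\,2-k}$, and reordering the columns into the two blocks therefore turns $\bar\Delta_r$, up to a nonzero power of $2$, into $\det[\,DW\mid W\,]$, where $W$ is the $2r\times r$ Hankel-type matrix with $W_{ik}$ proportional to $B_{i+k-1}/(i+k-1)$ and $D=\operatorname{diag}(2^{-1},\dots,2^{-2r})$. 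Expanding along the two column-blocks by the Laplace--Cauchy--Binet formula yields
$$\det[\,DW\mid W\,]=\sum_{S}\pm\,2^{-\sum_{i\in S}i}\,\det(W_S)\det(W_{S^{c}}),$$
the sum ranging over the $r$-subsets $S\subseteq\{1,\dots,2r\}$, where $W_S$ is the square submatrix of $W$ on the rows in $S$. This is the structural gain: the unstructured optimization over $S_{2r}$ is replaced by one over subsets $S$, and each factor $\det(W_S)$ is a Hankel minor of Bernoulli-type entries, amenable to the square-factorization of Lemma $3.1$ and the $2$-adic analysis of Section $3$.

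I would then write the valuation of the $S$-term as $-\sum_{i\in S}i+v_2(\det W_S)+v_2(\det W_{S^{c}})$ and argue that it is minimized at a unique $S$. The smooth part $-\sum_{i\in S}i$ is strictly smallest for $S=\{r+1,\dots,2r\}$, and for this $S$ the two factors $\det W_S$ and $\det W_{S^{c}}$ are precisely the principal and shifted Hankel minors governed by $\Delta_{r,1}$, whose nonvanishing and $2$-valuation are already known from Theorem $3.4$ and Lemma $3.2$; a direct check (as for $r=1$, where $S=\{2\}$ wins uniquely) confirms the pattern. The main obstacle is here: the erratic integer valuations $v_2(\det W_S)$ could, in principle, overcome the unit-sized gaps in $-\sum_{i\in S}i$ between competing subsets, so one must bound $v_2(\det W_S)$ uniformly in $S$ -- again through the recursive extremal permutations $\sigma_k,\tau_k$ of Lemma $3.3$, now adapted to non-interval row sets -- and show this bound is dominated by the smooth gap. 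If this domination can be established, the minimizing $S$ is unique and $v_2(\bar\Delta_r)>-\infty$ follows; failing that, the fallback is an uncrossing argument on the matching, replacing a block-mixing $\bar\tau$ by $\bar\tau\circ(i\,i')$ and using the explicit factors $2^{\,2-i-k}$ to check that the weight never decreases. I expect the Cauchy--Binet route to be the more promising, since it reduces the obstruction to the already-understood $D=1$ Hankel determinants.
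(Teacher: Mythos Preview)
The paper's Remark is not a proof; it is explicitly an incomplete discussion that concedes it cannot handle the block-mixing permutations $\bar\tau$ with $\bar\tau(\{1,\dots,r\})\neq\{1,\dots,r\}$. Your proposal does not close this gap either. The column operation plus Cauchy--Binet reformulation is a genuinely different and cleaner organization than the paper's direct permutation expansion---it trades the search over $S_{2r}$ for a search over $r$-subsets $S\subseteq\{1,\dots,2r\}$ and packages the $D=1$ machinery into the factors $\det W_S$---but the obstruction reappears in exactly the same place. The smooth weight $-\sum_{i\in S}i$ changes by only one unit when a single row is swapped between $S$ and $S^c$ (e.g.\ from $S=\{r+1,\dots,2r\}$ to $S'=\{r,r+2,\dots,2r\}$), whereas $v_2(\det W_S)$ can jump by much more than one; you acknowledge this yourself (``the erratic integer valuations $v_2(\det W_S)$ could, in principle, overcome the unit-sized gaps''). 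No uniform bound on $v_2(\det W_S)$ across arbitrary row sets $S$ is supplied, and the ``fallback'' uncrossing argument is only a sketch. So your proposal and the paper's Remark reach the same unresolved obstacle, merely parametrized differently (subsets $S$ versus block-mixing permutations $\bar\tau$); neither is a proof that $\Delta_{r,2}\neq 0$ for all $r$.

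A secondary gap: for your candidate minimizer $S=\{r+1,\dots,2r\}$ you assert that $\det W_S\neq 0$ and its $2$-valuation are ``already known from Theorem~3.4 and Lemma~3.2''. Theorem~3.4 treats only the principal block $W_{\{1,\dots,r\}}$, i.e.\ $\Delta_{r,1}$; the shifted block $W_{\{r+1,\dots,2r\}}$, with entries $B_m/m$ for $r+1\le m\le 3r-1$, is a different Hankel minor whose nonvanishing is not established anywhere in the paper. The methods of Lemmas~3.1--3.3 may adapt to it, but that is additional work, not a citation.
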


\vspace{10pt}
\noindent
\textbf{Aknowledgment}: I would like to express my gratitude to Florin Nicolae for the valuable discussions regarding this paper.

{}

\vspace{2mm} \noindent {\footnotesize
\begin{minipage}[b]{15cm}
Mircea Cimpoea\c s, Simion Stoilow Institute of Mathematics, Research unit 5, P.O.Box 1-764,\\
Bucharest 014700, Romania, E-mail: mircea.cimpoeas@imar.ro
\end{minipage}}
\end{document}